\documentclass[letterpaper, 10 pt, conference]{ieeeconf}  % Comment this line out
                                                          % if you need a4paper
%\documentclass[a4paper, 10pt, conference]{ieeeconf}      % Use this line for a4
                                                          % paper

\IEEEoverridecommandlockouts                              % This command is only
                                                          % needed if you want to
                                                          % use the \thanks command
\overrideIEEEmargins
% See the \addtolength command later in the file to balance the column lengths
% on the last page of the document

\usepackage{amsmath,amssymb}
\usepackage{latexsym}
\usepackage{psfrag}
\usepackage{epsfig}
\usepackage{epstopdf}
\usepackage{graphicx,subfigure}
\usepackage{graphicx}
\usepackage{color}
\usepackage{bm}
\usepackage{cite}

\def\scr#1{{\cal #1}}
\def\bb#1{{\mathbb #1}}

\def\rep#1{(\ref{#1})}
\newtheorem{theorem}{Theorem}

\newtheorem{lemma}{Lemma}
\newtheorem{remark}{Remark}

\newtheorem{assumption}{Assumption}
\newcommand{\matt}[1]{\begin{bmatrix}#1\end{bmatrix}}

\newcommand{\R}{{\rm I\!R}}

\DeclareMathOperator{\diag}{diag}
%=======================================================================

\begin{document}

\title{On Convergence Rate of a Continuous-Time Distributed Self-Appraisal Model with Time-Varying Relative Interaction Matrices}

\author{Weiguo Xia, Ji Liu,  Tamer Ba\c{s}ar, and Xi-Ming Sun%
\thanks{Weiguo Xia and Xi-Ming Sun are with the School of Control Science and Engineering, Dalian University of Technology, China ({\tt\small \{wgxiaseu,sunxm\}@dlut.edu.cn}). Ji Liu and Tamer Ba\c{s}ar are with the Coordinated Science Laboratory, University of Illinois at Urbana-Champaign, USA ({\tt\small \{jiliu,basar1\}@illinois.edu}).
The work of Liu and Ba\c{s}ar was supported in part by Office of Naval Research (ONR) MURI Grant N00014-16-1-2710, and in part by NSF under grant CCF 11-11342. }}
\maketitle
%=======================================================================

\begin{abstract}

This paper studies a recently proposed continuous-time distributed self-appraisal model with time-varying interactions among a network of $n$ individuals which are
characterized by a sequence of time-varying relative interaction matrices.
The model describes the evolution of the social-confidence levels of the individuals via a reflected appraisal mechanism in real time.
We first show by example that when the relative interaction matrices are
stochastic (not doubly stochastic),
the social-confidence levels of the individuals may not converge to a steady state.
We then show that when the relative interaction matrices are doubly stochastic,
the $n$ individuals' self-confidence levels will all converge to $1/n$,
which indicates a democratic state,
 exponentially fast
under appropriate assumptions,
and provide an explicit expression of the convergence rate.

\end{abstract}

%=======================================================================
\section{Introduction}

Opinion dynamics have a long history and have been studied extensively in social sciences.
Probably the most well-known model for opinion dynamics is the classical DeGroot model \cite{De74}.
Various models have been proposed for opinion dynamics to understand how an individual's opinion evolves over time, including the Friedkin-Johnsen model \cite{FrJo99,Fr15}, the Hegselmann-Krause model \cite{HeKr02,BlHeTs09,EtBa15},
the DeGroot-Friedkin model \cite{JiMiFrBu15,JiMiFrBu13,MiJiFrBu14,XuLiBa15}, and the
Altafini model \cite{Al13,MeShJoCaHo16,He14,LiChBaBe15,XiCaJo16}.
However, there is few work concerning the self-confidence levels of the individuals in a social network.

Recently, a new model, called
the DeGroot-Friedkin model, has been proposed in \cite{JiMiFrBu15}.
The model considers the situation when a group of individuals
discusses a sequence of issues, and studies the evolution of the self-confidence levels of individuals
(i.e., how confident an individual is for her opinions on the sequence of issues)
via the reflected appraisal mechanism proposed in \cite{Fr11}.
Lately, a modified DeGroot-Friedkin model has been proposed in \cite{XuLiBa15} which provides a
time-efficient, distributed implementation of the original DeGroot-Friedkin model. The model has been studied in \cite{XuLiBa15} with a fixed doubly stochastic relative interaction matrix,
and in \cite{XiLiJoBa16} with time-varying doubly stochastic relative interaction matrices, while
the analysis of the modified DeGroot-Friedkin model with
a fixed stochastic (not doubly stochastic) relative interaction matrix still remains open.

Both the DeGroot-Friedkin model and the modified DeGroot-Friedkin model
are described in discrete times.
Sometimes a continuous-time model would be a natural choice
especially when the opinions of individuals evolve gradually over time;
see for example \cite{Lo07,BlHeTs10}.
Recently, a continuous-time distributed self-appraisal model has been proposed in
\cite{ChLiBeXuBa17}, which shows that when the relative interaction matrix is fixed
and there is no ``dominant neighbor'' in the network,
the social-confidence levels of the individuals will asymptotically converge to a steady state,
depending on the relative interaction matrix, under appropriate connectivity assumption.
Local exponential stability of the steady state was shown by checking the Jacobian matrix.
But no convergence rate result was obtained.
Analysis of the continuous-time distributed self-appraisal model for the case of
general stochastic relative interaction matrix (i.e., without the assumption of no ``dominant neighbor'') also remains open.

In a realistic social network, the interaction among the individuals may change from time to time.
With this in mind, this paper aims to study the continuous-time distributed self-appraisal model with time-varying interactions which are described by a sequence of time-dependent relative interaction matrices,
and specifically derive a convergence rate of the model.
We first construct a simple example to show that the  model may not converge for general time-varying stochastic (not doubly stochastic) relative interaction matrices.
With this observation in mind, we focus our attention on the case when the relative interaction matrices
are doubly stochastic, and show that the self-appraisals of the individuals all converge to $\frac{1}{n}$
exponentially fast, where $n$ is the number of individuals in a network,
and obtain an explicit expression of the convergence rate.
Although doubly stochastic relative interaction matrices may be artificial,
this case has an important social meaning as it explains how a democratic state is formed
in a social network \cite{JiMiFrBu15}.

The main contribution of this paper is to provide an explicit expression of the convergence rate
of the continuous-time distributed self-appraisal model \cite{ChLiBeXuBa17} with time-varying doubly stochastic
relative interaction matrices.
We extend the result in \cite{ChLiBeXuBa17} in three-fold.
First, our result implies that the model converges for all fixed doubly stochastic
relative interaction matrices, whereas the result in \cite{ChLiBeXuBa17} does not subsume
this implication because not all doubly stochastic matrices satisfy the no ``dominant neighbor'' assumption.
Second, we show that the convergence is exponentially fast for doubly stochastic relative interaction matrices,
whereas only asymptotic convergence was proved in \cite{ChLiBeXuBa17}.
Lastly, we show that exponential convergence holds for time-varying
doubly stochastic relative interaction matrices and obtain an explicit expression of the convergence rate,
which was not considered in \cite{ChLiBeXuBa17}.

The remainder of this paper is organized as follows. Some notations are
introduced in Section \ref{notation}. In Section \ref{model}, the continuous-time self-appraisal model is introduced.
In Section \ref{motivating}, an  example is presented to motivate the assumptions. The main results of the paper are presented in Section \ref{result}, whose analysis and proofs
are given in Section \ref{analysis}. Some discussions are given in Section \ref{discussion}. The paper ends with some concluding remarks in Section \ref{conclusion}.

\subsection{Notations} \label{notation}

For a positive integer $n$, let $\mathcal V$  denote the set $\{1,\ldots,n\}$.
We use $\Delta_n$ to denote the simplex $\{x\in\R^n:\ x_i\geq0, i\in\mathcal V, \sum_{i=1}^nx_i=1\}$.
For each $i\in\scr{V}$, we use $e_i$ to denote the vector in $\R^n$ whose $i$th element equals 1 and all the other elements equal 0. Let $I$ denote the identity matrix and let $\bm{1}$  denote the all-one vector with appropriate dimensions. A row-stochastic matrix is a nonnegative matrix with each row sum equal 1, and is simply called a stochastic matrix. A matrix is column-stochastic if its transpose is a row-stochastic matrix. A matrix is called doubly stochastic if  it is both row-stochastic and column-stochastic. For any two real vectors $x,y\in\R^n$, we write $x\geq y$ if $x_i\geq y_i$ for all $i\in\scr{V}$ and $x>y$ if $x_i> y_i$ for all $i\in\scr{V}$. We use $\diag(x)$ to denote the diagonal matrix with the $i$th entry being $x_i$. For a scalar $a\in\R$, let ${\lfloor a\rfloor}$ denote the largest integer that is no larger than $a$.
%For a directed graph $\mathbb G=(\mathcal V,\mathcal E)$, if $(i,j)\in\mathcal E$, the vertex $j$ is called an out-neighbor of vertex $i$ and $i$ is an in-neighbor of $j$. The out-neighbor set of $i$ is denoted by $\mathcal N_i^{{\rm out}}(i)=\{j|(i,j)\in\mathcal E\}$ and the in-neighbor set of $i$ is denoted by $\mathcal N_i^{{\rm in}}(i)=\{j|(j,i)\in\mathcal E\}$.

%=======================================================================
\section{The Model} \label{model}

In this section, we introduce the continuous-time distributed self-appraisal model proposed in \cite{ChLiBeXuBa17}.

Consider a network consisting of $n>1$ individuals with the constraint that each individual
can communicate only with certain other individuals called ``neighbors''.
The neighbor relationships among the $n$ individuals are described by a time-dependent,
$n$-vertex, directed graph $\bb G(t)$ whose
vertices correspond to individuals and whose arcs depict neighbor relationships.
Specifically, we say that individual $j$ is an {\em outgoing neighbor} of individual $i$ at time $t$ if
there is an arc from vertex $i$ to vertex $j$ in $\bb G(t)$, and say that
individual $k$ is an {\em incoming neighbor} of individual $i$ at time $t$ if
there is an arc from vertex $k$ to vertex $i$ in $\bb G(t)$.
We use $\scr N_i^{{\rm in}}(t)$ and $\scr N_i^{{\rm out}}(t)$ to denote the sets of
incoming and outgoing neighbors of individual $i$ at time $t$, respectively.
Each individual $i$ has control over a real-valued quantity $x_i(t)$
which represents the self-appraisal of individual $i$.
The self-appraisal $x_i(t)$ takes values in the interval $[0,1]$, which
measures how confident individual $i$ is on her opinions.
The larger $x_i(t)$ is, the more confident is individual $i$.
The continuous-time distributed self-appraisal model is as follows:
\begin{equation}\label{eq:sys1}
\dot{x}_i(t)=-(1-x_i(t))x_i(t)+\sum_{j\in\scr N_i^{{\rm in}}(t)} c_{ji}(t)(1-x_j(t))x_j(t),
\end{equation}
where $c_{ji}(t)$ is the {\em relative inter-personal weight} \cite{JiMiFrBu15} that individual $j$ assigns to her outgoing neighbor\footnote{
Note that $j$ is an incoming neighbor of $i$ in \rep{eq:sys1}, and thus $i$ is an outgoing neighbor of $j$.
}
$i$ at time $t$  which is a positive real number.

The relative inter-personal weights satisfy the following condition:
\begin{equation}\label{eq:C2}
\sum_{j\in\scr N_i^{{\rm out}}(t)} c_{ij}(t)=1,\;\;\; i\in\scr V.
\end{equation}
Note that each $c_{ij}(t)$ in \rep{eq:C2} is in the interval $(0,1]$,
and can be set by individual $i$ herself.
Let $c_{ij}(t)=0$ for all pairs of $i$ and $j$ such that $j\notin \scr N_i^{{\rm out}}(t)$.
Then, condition \rep{eq:C2} implies that
$\sum_{j=1}^n c_{ij}(t) =1$ for all $i\in\scr V$ and time $t$, and thus
each matrix $C(t)=\matt{c_{ij}(t)}_{n\times n}$ is a stochastic matrix
whose diagonal entries all equal zero.
The matrix $C(t)$ is called the {\em relative interaction matrix} \cite{JiMiFrBu15} at time $t$.

At initial time $t=0$,
the self-appraisals are scaled so that they sum to one, i.e., $\sum_{i\in\scr V}x_i(0)=1$.
It will be shown that this initial condition guarantees that
$\sum_{i\in\scr V}x_i(t)=1$ for all time $t>0$.

\begin{remark}
System (\ref{eq:sys1}) with a fixed relative interaction (i.e., $c_{ji}(t)\equiv c_{ji}$ for all time $t$)
was proposed and studied in \cite{ChLiBeXuBa17}. The system can be viewed as a continuous-time version of the modified DeGroot-Friedkin model studied in \cite{XuLiBa15,XiLiJoBa16}.
\hfill $\Box$
\end{remark}

To help readers to grasp the social meaning of the model \rep{eq:sys1}
and understand the motivations,
we give a brief interpretation of the model below. See \cite{ChLiBeXuBa17} for detailed explanation.

We begin with the following continuous-time opinion dynamics:
\begin{equation}\label{eq:sys}
\dot{z}_i(t)=-(1-x_i(t))\Big(-z_i(t)+\sum_{j\in\scr N_i^{{\rm out}}(t)} c_{ij}(t)z_j(t)\Big),\ i\in \scr V,
\end{equation}
where $z_i(t)$ is a real number representing the opinion of individual $i$ on an issue of interest at time $t$.
Note that system \rep{eq:sys} is a continuous-time consensus process \cite{ReBe05}
with the dynamics of $z_i(t)$ scaled by the nonnegative factor $(1-x_i(t))$.
Thus, $(1-x_i(t))$ can be viewed as a measure of the
total amount of opinions individual $i$ accepts from others at time $t$,
and $c_{ij}(t)(1-x_i(t))$ can be regarded as the corresponding portion individual $i$ accepts
from neighbor $j$, which is consistent with the social meaning of $x_i(t)$, i.e.,
$x_i(t)$ is the self-appraisal of individual $i$ measuring how confident she is on her current opinion.

We now turn to the justification of the model \rep{eq:sys1}.
From \rep{eq:sys1}, the dynamics of $x_i(t)$ is determined by two terms: $(1-x_i(t))x_i(t)$ and $\sum_{j\in\scr N_i^{{\rm in}}(t)}c_{ji}(t)(1-x_j(t))x_j(t)$.
We consider the latter first.
Recall that $c_{ji}(t)(1-x_j(t))$ measures the amount of opinion individual $j$ accepts from neighbor $i$
in the opinion dynamics \rep{eq:sys} and $x_j(t)$ is the self-appraisal of individual $j$
reflecting the importance of individual $j$ in the network.
Product $c_{ji}(t)(1-x_j(t))x_j(t)$ can then be viewed as the measure of
importance of individual $i$ to individual $j$,
and thus $\sum_{j\in\scr N_i^{{\rm in}}(t)}c_{ji}(t)(1-x_j(t))x_j(t)$
can be viewed as the measure of importance of individual $i$ to the others in the network.
We then consider the other term $(1-x_i(t))x_i(t)$.
In view of condition (\ref{eq:C2}), it follows that
$$(1-x_i(t))x_i(t)=\sum_{j\in\scr N_i^{{\rm out}}(t)}c_{ij}(t)(1-x_i(t))x_i(t).$$
Thus, $(1-x_i(t))x_i(t)$ can be interpreted as the measure of importance of others to individual $i$.

From the preceding discussion, the model (\ref{eq:sys1}) is designed
for each individual to calculate, in a distributed manner, the difference
between her level of importance to others and others' level of importance to her.
Note that any equilibrium state of system (\ref{eq:sys1}) is a state when the
difference equals zero for each individual.
Therefore, the distributed self-appraisal model (\ref{eq:sys1})
aims to drive all individuals' differences to zero.

To proceed,
let $x(t)=[x_1(t),\ldots,x_n(t)]^\top$ and $X(t)=\diag(x(t))$.
Then, system (\ref{eq:sys1}) can be written in the form of an
$n$-dimensional state equation:
\begin{align}\label{eq:sys2}
\dot{x}(t)=&-(I-X(t))x(t)+C(t)^\top(I-X(t))x(t),\nonumber\\
=&-W(t,x(t))x(t),
\end{align}
where $W(t,x(t))\triangleq I-X(t)-C(t)^\top(I-X(t))$. Throughout the paper, we assume that $C(t)$ is piecewise constant, i.e., there exists an infinite time sequence $t_0,t_1,t_2,\dots,$ with $t_0=0$ such that
\begin{align}\label{eq:C}
C(t)=C(t_k), \quad t\in[t_k,t_{k+1}),\ k\geq0.
\end{align}
Then, system (\ref{eq:sys1}) can be rewritten as
{\small
\begin{align}\label{eq:sys3}
\dot{x}_i(t)=&-(1-x_i(t))x_i(t)+\sum_{j\in\scr N_i^{{\rm in}}(t)}c_{ji}(t_k)(1-x_j(t))x_j(t),
\end{align}
}
or in a compact form
\begin{align}\label{eq:sys4}
\dot{x}(t)=&-(I-X(t))x(t)+C(t_k)^\top(I-X(t))x(t),
\end{align}
for $t\in[t_k,t_{k+1}).$ Let $\tau_k\triangleq t_{k+1}-t_k$. $\tau_k$ is a positive number called a dwell time.

\begin{remark}
Since the matrix $C(t)$ is stochastic, it can be verified that $\bm{1}^\top W(t,x(t))=\bm{1}^\top[I-X(t)-C(t)^\top(I-X(t))]=0.$ The fact that $\Delta_n$ is positive invariant as will be proved in Lemma \ref{lm:5} later implies $1-x_i(t)\geq0, i\in\mathcal V$ and thus it follows that $W^\top(t,x(t))$ is a Laplacian matrix \cite{GoRo01} for any $t\geq0$. It is worth noting that $W(t,x(t))$ is not necessarily a Laplacian matrix even if $C(t)$ is doubly stochastic. The difference between system (\ref{eq:sys2}) and the continuous-time consensus algorithm $\dot{x}(t)=-L(t)x(t)$ is that $W^\top(t,x(t))$ is a state-dependent Laplacian matrix, thus resulting in a nonlinear system, while the Laplacian matrix $L(t)$ is
not state-dependent. The derived convergence results for the consensus system are typically based on assumptions on the elements of the Laplacian matrix such as $-l_{ij}(t)\in[\underline{\alpha},\bar{\alpha}]\cup\{0\},$ for $t\geq0$, where $\underline{\alpha}$ and $\bar{\alpha}$ are positive constants \cite{ReBe05,OlMu04,XiWa08}. While system (\ref{eq:sys2}) involves a state-dependent matrix $W(t,x(t))$ and so does (\ref{eq:sys4}).  Whether the condition that the boundedness of the nonzero off-diagonal elements of $-W(t,x(t))$ from below for all time $t\geq0$ is satisfied or not is unknown and hard to check. Thus, those existing results of continuous-time consensus processes \cite{ReBe05,OlMu04,XiWa08} cannot be applied here.
Although there are some convergence results for opinion dynamics models with state-dependent connectivity and for consensus systems with cut-balanced properties available in the literature \cite{BlHeTs09,HeTs13,MaGi13}, we do not see a way to apply these results and their analysis to system (\ref{eq:sys2}).
In this paper, we will resort to an analysis technique (see Section \ref{analysis}) to bound the extreme values of $x_i(t)$ so that the convergence rate can be characterized. The technique is partially inspired by the work of  \cite{ShJo13d} as system (\ref{eq:sys3}) is transformed to a form of equations (\ref{eq:lm6-1}) that also appear in the analysis of consensus systems.
\hfill $\Box$
\end{remark}

In this paper, we will look into the dynamic behavior of system (\ref{eq:sys3}) and analyze how the self-appraisals of individuals evolve with time-varying relative interaction matrices. We will focus our attention on the case when $C(t)$ is doubly stochastic, because of the motivating example in the next section, and establish an exponential convergence result for the state of system (\ref{eq:sys3}).

  %=======================================================================
\section{A Motivating Example} \label{motivating}

In this section, we provide an  example to motivate the assumption that $C(t)$ is a doubly stochastic matrix for all $t$ proposed in the next section.

When $C(t)\equiv C$ is fixed for all $t\geq t_0$, it has been shown in \cite{ChLiBeXuBa17} that system (\ref{eq:sys1}) converges to an equilibrium other than $e_i,\ i=1,\dots,n$, for almost all initial conditions in $\Delta_n$ under the constraints that every agent has at least two incoming neighbors and the inter-individual weights  $c_{ij}$ are upper bounded by $\frac{1}{2}$. However, for a time-varying relative interaction matrix $C(t)$, the convergence of the system cannot be guaranteed in general, which is illustrated by the following example.

Let
$$ C_1=\begin{bmatrix}
        0 &\frac{3}{4}& 0& \frac{1}{4} \\
        \frac{1}{4}& 0 &\frac{3}{4}& 0 \\
        0 &\frac{1}{4} &0 &\frac{3}{4}\\
        \frac{3}{4} &0& \frac{1}{4} &0 \\
      \end{bmatrix},\ C_2=\begin{bmatrix}
                            0& 1& 0& 0  \\
                            \frac{1}{2}& 0 &\frac{1}{2} &0 \\
                            0&\frac{1}{3}& 0 &\frac{2}{3} \\
                            0 &0 &1 &0\\
                          \end{bmatrix},$$
and
\begin{equation}\label{eq:C}
C(t)=\begin{cases}
C_1, &t\in[2k*0.4,(2k+1)*0.4),\\
C_2, &t\in[(2k+1)*0.4,(2k+2)*0.4),
\end{cases}
\end{equation}
for all integers $k\geq0$. Then, $t_k-t_{k-1}=\tau=0.4,\ k\geq0$. Since both $C_1$ and $C_2$ are irreducible, at each time instant $t\geq0$ the graph is strongly connected. Note that $C_1$ is a doubly stochastic matrix, one knows that system (\ref{eq:sys1}) with a fixed $C(t)\equiv C_1$ will converge to $\frac{1}{4}\bm{1}$ \cite{ChLiBeXuBa17}. $C_2$ is not doubly stochastic and for system (\ref{eq:sys1}) with a fixed $C(t)\equiv C_2$, the state will converge to $[0.0917,0.211,0.486,0.211]^\top$.
For a random initial condition in $\Delta_4\backslash\{e_1,\ldots,e_n\}$, when $C(t)$ takes the form of (\ref{eq:C}), the system state does not converge as shown in Fig.~\ref{fig:1}.

\begin{figure}[!htb]
  \centering
  \includegraphics[width=6.5cm]{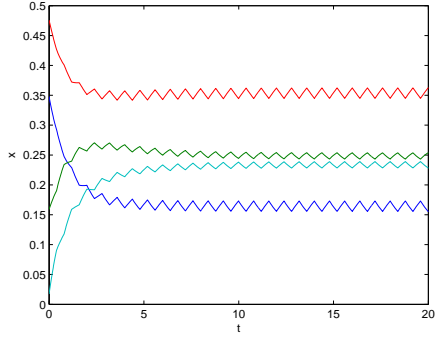}
  \caption{The system state with a time-varying $C(t)$ switching between a doubly stochastic matrix $C_1$ and a non-doubly stochastic matrix $C_2$.}
  \label{fig:1}
\end{figure}

The reason can be explained as follows. The equilibria other than $e_i,\ i=1,\dots,4$, of system (\ref{eq:sys1}) corresponding to $C_1$ and $C_2$ are different. Then, one can imagine that when $C(t)$ switches between $C_1$ and $C_2$, the system state will oscillate, since  in each switching mode, the state tends to converge to the corresponding equilibrium.

Note that when $C(t)\equiv C,\ t\geq t_0$, the system state will converge to $\frac{1}{n}\bm{1}$ as long as $C$ is irreducible and doubly stochastic. This motivates us to focus on the case of doubly stochastic matrices $C(t)$ and impose some connectivity conditions to guarantee the convergence of system (\ref{eq:sys3}).

 %=======================================================================
\section{Main Results} \label{result}

In this section, we present the main result of the paper.  The following assumptions will be considered in the following discussion on system (\ref{eq:sys3}).

\begin{assumption}\label{ass:1}
Each $C(t_k)$, $k\geq0$, is a doubly stochastic matrix with zero diagonal elements, and there exists a constant $\gamma>0$ such that $c_{ij}(t_k)\geq\gamma$ for all nonzero $c_{ij}(t_k)$.
\end{assumption}

\begin{assumption}\label{ass:2}
There exists an integer $B\geq1$ such that the union graph $\bigcup_{k=l}^{l+B-1} \mathbb G(t_k)$ is strongly connected for all nonnegative integers $l\geq0$.
\end{assumption}

\begin{assumption}\label{ass:3}
There exist two positive constants $\bar{\tau}_D$ and $\underline{\tau}_D$ such that $\bar{\tau}_D\geq \tau_k\geq\underline{\tau}_D$ for all $k\geq0$.
\end{assumption}

Let
$$h(t)=\max_{i\in\scr V}\{x_i(t)\},\ l(t)=\min_{i\in\scr V}\{x_i(t)\},\ V(t)=h(t)-l(t).$$ The function $V(t)$ is a measure of the maximum difference between the self-appraisals of the individuals in the network. If $V(t)\rightarrow0$ as $t$ goes to infinity, then the self-appraisals of the individuals all converge to a common value that  is $\frac{1}{n}$ as will be shown. The main result of the paper is stated as follows.

\begin{theorem}\label{thm:1}
Suppose that $n\geq3$ and Assumptions \ref{ass:1}, \ref{ass:2} and \ref{ass:3} hold. Then,
\begin{itemize}
\item[(a)] $\Delta_n$ is a positive invariant set of system (\ref{eq:sys3}), i.e., for any initial condition $x(t_0)\in\Delta_n$, $x(t)\in\Delta_n$ for all $t\geq t_0$.
    \item[(b)]If $x(t_0)\in\Delta_n\backslash\{e_1,\dots,e_n\}$ and $x(t_0)$ has $m$ nonzero entries, $m\geq2$, then $\lim_{t\rightarrow \infty}x_i(t)=\frac{1}{n}$ for all $i\in\scr{V},$ and the convergence is exponentially fast with a rate given by
\begin{equation}\label{eq:thm1}
V(t)\leq\Big(1-\alpha \mu^{n-1}\Big)^{-\frac{1+2B(n-1)}{B(n-1)}}e^{-\lambda t}V(t_0),
\end{equation}
for all $t\geq t_0$, where
\begin{align}\label{eq:alpha}
&\alpha\triangleq e^{-\bar{\tau}_DB(n-1)(1-2l(t_{(n-m)B}))},\nonumber\\
 &\mu\triangleq\alpha\gamma (1-e^{-\underline{\tau}_Dl(t_{(n-m)B})}),\nonumber\\
& \lambda\triangleq\frac{\ln (1-\alpha \mu^{n-1})^{-1}}{B\bar{\tau}_D(n-1)}
 \end{align} with $l(t_{(n-m)B})=\min_{i\in\mathcal V}\{x_i(t_{(n-m)B})\}>0$.
\end{itemize}
\end{theorem}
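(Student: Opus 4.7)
For part (a), I would first verify that $\mathbf 1^\top\dot x(t)=0$: summing the right-hand side of \rep{eq:sys3} over $i\in\scr V$ and interchanging the two sums gives
\[
\sum_{i\in\scr V}\dot x_i=-\sum_{i\in\scr V}(1-x_i)x_i+\sum_{j\in\scr V}(1-x_j)x_j\sum_{i\in\scr V}c_{ji}(t_k),
\]
and the inner column sum equals $1$ by the double stochasticity of $C(t_k)$ (Assumption~\ref{ass:1}), so the two terms cancel and $\sum_i x_i(t)\equiv 1$. For nonnegativity, on the face $\{x_i=0\}$ the vector field reduces to $\sum_j c_{ji}(t_k)(1-x_j)x_j\geq 0$ whenever all $x_j\in[0,1]$, so the flow is inward on $\partial\Delta_n$; combined with the sum constraint this yields $x(t)\in\Delta_n$ for all $t\geq t_0$. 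Before turning to (b), I would use double stochasticity a second time to recast \rep{eq:sys3} on $[t_k,t_{k+1})$ as the consensus-like equation
\[
\dot x_i(t)=\sum_{j\neq i}c_{ji}(t_k)\bigl[u_j(t)-u_i(t)\bigr],\qquad u_i:=x_i(1-x_i),
\]
and to track the span $V(t)=h(t)-l(t)$, whose vanishing together with the sum constraint forces each $x_i\to 1/n$.

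The first sub-goal for (b) is to show $l(t_{(n-m)B})>0$. I would argue by induction on the number of zero components: whenever $x_i(t_k)=0$ but some index $j$ with $c_{ji}(t_{k'})>0$ for some $k'\in\{k,\ldots,k+B-1\}$ satisfies $x_j(t_{k'})\in(0,1)$, one has $\dot x_i>0$ on a sub-interval and $x_i$ leaves the boundary. Assumption~\ref{ass:2} always supplies such a $(j,k')$ within any $B$ switches, and the count $n-m$ precisely tracks the number of initial zeros. Next I would derive single-interval estimates: on each $[t_k,t_{k+1})$ the bounds $\dot x_i\geq -(1-x_i)x_i$ and $\dot x_i\leq -(1-x_i)x_i+\sum_j c_{ji}u_j$ can be integrated by Gronwall. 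Using $\underline{\tau}_D\leq\tau_k\leq\bar\tau_D$, $c_{ji}\geq\gamma$ on nonzero entries (Assumption~\ref{ass:1}), and the floor $\ell_\star:=l(t_{(n-m)B})$, these produce the constants
\[
\alpha=e^{-\bar\tau_D B(n-1)(1-2\ell_\star)},\qquad \mu=\alpha\gamma\bigl(1-e^{-\underline{\tau}_D\ell_\star}\bigr);
\]
intuitively $\alpha$ bounds how much an extremal component can drift over $B(n-1)$ intervals, while $\mu$ is the minimum amount by which a single active edge transports its transmitter's value to its receiver in one dwell period.

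The heart of the proof is then the geometric contraction $V(t_{k+B(n-1)})\leq(1-\alpha\mu^{n-1})V(t_k)$ for every $t_k\geq t_{(n-m)B}$. To prove it I would fix such a $t_k$, let $i_0$ attain $h(t_k)$, and invoke Assumption~\ref{ass:2} to obtain a directed path $i_0\to i_1\to\cdots\to i_{n-1}$ in $\bigcup_{\ell=0}^{B(n-1)-1}\mathbb G(t_{k+\ell})$ covering every vertex. Propagating the single-interval estimates along this path, at time $t_{k+rB}$ the value $x_{i_r}$ is pulled away from $h(t_k)$ by at least $\mu$ times the gap it had $B$ switches earlier, while $\alpha$ prevents the other components from escaping in the meantime; compounding $n-1$ hops gives the claim, and a symmetric argument starting from a minimiser handles $l$. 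Iterating the contraction over successive $B(n-1)$-windows and converting the integer index to continuous time through $\tau_k\leq\bar\tau_D$ yields the exponential rate $\lambda=\ln(1-\alpha\mu^{n-1})^{-1}/(B\bar\tau_D(n-1))$; the warm-up up to $t_{(n-m)B}$ (on which only the trivial bound $V(t)\leq V(t_0)$ is available) plus one extra block absorbed into the prefactor produces the explicit coefficient $(1-\alpha\mu^{n-1})^{-(1+2B(n-1))/(B(n-1))}$.

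The main obstacle will be the contraction claim. The nonlinearity $u_j-u_i=(x_j-x_i)(1-x_i-x_j)$ makes the effective coupling state-dependent and can make it arbitrarily small when $x_i+x_j$ is close to $1$; consequently the state-dependent $W(t,x(t))$ in \rep{eq:sys4} does not satisfy the uniform lower bound on nonzero off-diagonals required by the classical continuous-time consensus theorems, and it sits outside the cut-balanced framework as well, as emphasised in the remark following \rep{eq:sys4}. The ingredient that rescues a Shi--Johansson-style chain argument \cite{ShJo13d} is precisely the uniform floor $\ell_\star$ on the minimum established in the positivity step, which is exactly why $\ell_\star=l(t_{(n-m)B})$ appears inside both $\alpha$ and $\mu$ in the statement of the rate.
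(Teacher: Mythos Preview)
Your overall architecture matches the paper's: invariance of $\Delta_n$, strict positivity by time $t_{(n-m)B}$, a contraction $V(t_{k_0+(n-1)B})\le(1-\alpha\mu^{n-1})V(t_{k_0})$ over each block of $B(n-1)$ switches, then iteration and conversion to continuous time. The constants are correctly identified, and your recasting $\dot x_i=\sum_{j}c_{ji}(t_k)[u_j-u_i]$ (which uses the \emph{column} sums of $C$) is exactly the place where double stochasticity enters. But the heart of the proof---the contraction step---has two genuine gaps as written.

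First, the chain runs in the wrong direction. If $i_0$ attains $h(t_k)$ and $i_0\to i_1$ is an arc of $\bigcup\mathbb G$, then $c_{i_0i_1}>0$ means $i_0$ is an \emph{incoming} neighbor of $i_1$, so the coupling pulls $x_{i_1}$ \emph{toward} $h(t_k)$, not away from it; you cannot conclude that $x_{i_r}$ drops below $h(t_k)$ this way. The paper instead starts from the set $\mathcal V_0$ of \emph{minimizers} and propagates outward along arcs $j\to i$ with $j\in\mathcal V_0$, obtaining upper bounds $x_i\le \alpha\mu^{r}l(t_{k_0})+(1-\alpha\mu^{r})h(t_{k_0})$ for nodes reached after $r$ hops, until every node---including any maximizer---is so bounded. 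A single pass suffices: the ``symmetric argument for $l$'' is unnecessary because $l(t)$ is already nondecreasing. Second, strong connectivity of the union graph (Assumption~\ref{ass:2}) does not guarantee a Hamiltonian path $i_0\to\cdots\to i_{n-1}$; the paper avoids this by growing nested sets $\mathcal V_0,\mathcal V_0\cup\mathcal V_1,\ldots$ one vertex at a time, which only needs an arc leaving the current set within each $B$-window. Finally, the Gr\"onwall step is not straightforward on the quadratic form $\sum_j c_{ji}(u_j-u_i)$: the paper's Lemma~\ref{lm:4} supplies, for each $i$, a scalar $v_i\in[\min_j x_j,\max_j x_j]$ with $v_i\le\sum_j c_{ji}x_j$ and $\sum_j c_{ji}(x_j-x_j^2)=v_i-v_i^2$, which yields the factorization $\dot x_i=-(x_i-v_i)(1-x_i-v_i)$. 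It is precisely the two-sided control $2\ell_\star\le x_i+v_i\le 1$ (the upper bound using only two coordinates) and, for the active-edge step, $x_{i_1}+v_{i_1}\le 1-\ell_\star$ (requiring $n\ge3$) that produce $\alpha=e^{-\bar\tau_D B(n-1)(1-2\ell_\star)}$ and the factor $1-e^{-\underline\tau_D\ell_\star}$ inside $\mu$.
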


\begin{remark}
The intuition for the convergence of  system  (\ref{eq:sys3})  to a democratic state $[\frac{1}{n},\frac{1}{n},\ldots,\frac{1}{n}]^\top$ is that the relative interaction matrix $C(t_k)$ has a common left eigenvector $\bm{1}$ for all $k\geq0$ under Assumption \ref{ass:1}. However it is not straightforward to derive the conclusion established in Theorem \ref{thm:1}. For  the case when $C(t)\equiv C$ is fixed for all $t\geq t_0$ and is stochastic, but not necessarily doubly stochastic, the analysis of system (\ref{eq:sys1}) is still open. Note that not all doubly stochastic matrices satisfy the no ``dominant neighbor'' assumption in \cite{ChLiBeXuBa17}, and hence the result in \cite{ChLiBeXuBa17} does not subsume the conclusion that system  (\ref{eq:sys3})  converges for all fixed doubly stochastic relative interaction matrices. In addition, we provide an explicit expression of the exponential convergence rate of system  (\ref{eq:sys3}),
whereas only asymptotic convergence was proved in \cite{ChLiBeXuBa17}.
\hfill $\Box$
\end{remark}

\section{Analysis} \label{analysis}

We begin with some preliminaries. The upper Dini derivative of a continuous function $V(t,x(t)):\R\times \R^m\rightarrow \R$  with respect to $t$ is defined as
$$D^+V(t,x(t)) =\limsup_{s\rightarrow0^+}\frac{V(t+s,x(t+s))-V(t,x(t))}{s}.$$ The next result is useful for the calculation of Dini derivatives of a function \cite{Da66,LiFrMa07}.

\begin{lemma}\label{lm:3}
Let $V_i(t, x):\R\times\R^m\rightarrow \R,\ i\in\mathcal V$ be of class $C^1$ and $V(t,x)=\max_{i\in\mathcal V} V_i(t, x)$. If $I(t)=\{i\in\mathcal V|V(t,x(t))=V_i(t, x(t))\}$ is the set of indices where the maximum is reached at $t$, then $D^+V(t,x(t))=\max_{i\in I(t)}\{\dot{V}_i(t,x_i(t))\}$.
\end{lemma}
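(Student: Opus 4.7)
The plan is to prove the equality by establishing the two inequalities $D^+V(t,x(t)) \geq \max_{i\in I(t)} \dot V_i(t, x_i(t))$ and $D^+V(t,x(t)) \leq \max_{i\in I(t)} \dot V_i(t, x_i(t))$ separately. The tools needed are elementary: the $C^1$ regularity of each $V_i$, continuity of the trajectory $x(\cdot)$, and finiteness of the index set $\mathcal V$.

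I would first handle the easy lower bound. Fix any $i_0 \in I(t)$. Since $V$ is a pointwise maximum, $V(t+s, x(t+s)) \geq V_{i_0}(t+s, x(t+s))$ for every $s$, while $V(t,x(t)) = V_{i_0}(t, x(t))$ by membership in $I(t)$. Subtracting, dividing by $s > 0$, and passing to $\limsup_{s\to 0^+}$ turns the left side into $D^+V(t,x(t))$ and, since $V_{i_0}$ is $C^1$ so the limit actually exists, the right side into $\dot V_{i_0}(t, x_{i_0}(t))$. Maximizing over $i_0 \in I(t)$ delivers the lower bound.

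The real work is the matching upper bound, and the key auxiliary step I would prove is that for all sufficiently small $s > 0$, the maximum $V(t+s, x(t+s))$ is attained by some index already in $I(t)$. This follows from a finiteness-plus-continuity argument: put $\delta = \min_{j\notin I(t)}(V(t,x(t)) - V_j(t,x(t))) > 0$ (the case $I(t) = \mathcal V$ being trivial), then continuity of every $V_j$ and of $x(\cdot)$ forces $V_j(t+s, x(t+s)) < V(t+s, x(t+s))$ for every $j \notin I(t)$ when $s$ is small. Granted this, any maximizer $i_s$ at $t+s$ lies in $I(t)$, so the difference quotient equals $(V_{i_s}(t+s,x(t+s)) - V_{i_s}(t,x(t)))/s \leq \max_{i\in I(t)} (V_i(t+s,x(t+s)) - V_i(t,x(t)))/s$. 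Taking $\limsup_{s\to 0^+}$ and exchanging max with limit, which is legal because $I(t)$ is finite and each $V_i$ is $C^1$, yields the upper bound.

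The main obstacle is exactly the active-set claim above: one must rule out the possibility that as $s \to 0^+$ the maximizing index escapes $I(t)$, which would spoil the alignment between $V$ and $V_{i_s}$ at $t$ that makes the telescoping identity work. Finiteness of $\mathcal V$ is essential here, because it upgrades the pointwise strict inequalities $V_j(t,x(t)) < V(t,x(t))$ (for $j\notin I(t)$) into a uniform gap $\delta > 0$ that survives small perturbations by continuity; with this in hand, the two inequalities combine to give the claimed identity.
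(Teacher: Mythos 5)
Your argument is correct, but note that the paper does not actually prove this lemma: it is stated as a known auxiliary result and attributed to Danskin and to Lin--Francis--Maggiore, so there is no in-paper proof to compare against. Your two-inequality strategy --- the easy lower bound from $V \geq V_{i_0}$ with equality at $t$ for each $i_0 \in I(t)$, plus the upper bound via the active-set localization (for small $s>0$ the maximizer at $t+s$ must lie in $I(t)$, thanks to the uniform gap $\delta>0$ afforded by finiteness of $\mathcal V$ and continuity) --- is precisely the standard proof of this Danskin-type identity, and the exchange of $\max$ and $\limsup$ over the finite set $I(t)$ is justified as you say. One small imprecision: you list only continuity of the trajectory $x(\cdot)$ among your tools, but for the one-sided limit of $\bigl(V_{i_0}(t+s,x(t+s))-V_{i_0}(t,x(t))\bigr)/s$ to exist and equal $\dot V_{i_0}$ you need $x(\cdot)$ to be (right-)differentiable at $t$; this is implicit in the lemma's statement, since writing $\dot V_i$ along the trajectory already presupposes it, and it holds in the paper's application where $x(\cdot)$ solves an ODE, but it should be stated as a hypothesis rather than folded into ``continuity.''
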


The next lemma proven in \cite{XiLiJoBa16} will be very useful in the following discussion.

\begin{lemma}\label{lm:4}
Suppose that $\beta\in\R^n$, $\beta\geq0$, $\sum_{k=1}^n\beta_k=1$, and $x\in\R^n$, $x\geq0$, $\sum_{k=1}^nx_k=1$. Then, there exists a constant
$$v\in  \left[\min\limits_{k\in\scr{V} \atop \beta_k\neq0}\{x_k\},\max\limits_{k\in\scr{V} \atop \beta_k\neq0}\{x_k\}\right]$$
such that $v\leq\sum_{k=1}^n\beta_kx_k$ and
\begin{equation}\label{lm:4-1}
v-v^2=\sum_{k=1}^n\beta_k(x_k-x_k^2).
\end{equation}
\end{lemma}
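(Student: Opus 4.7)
The plan is to reduce the lemma to an application of the intermediate value theorem for the concave parabola $\phi(t)=t-t^2=t(1-t)$ on a carefully chosen interval, after establishing matching upper and lower bounds on $A:=\sum_{k=1}^{n}\beta_{k}\phi(x_{k})$ in terms of $\phi$ evaluated at $\bar{x}:=\sum_{k=1}^{n}\beta_{k}x_{k}$ and at $m:=\min_{k:\beta_{k}\neq 0}x_{k}$. Throughout, let $M:=\max_{k:\beta_{k}\neq 0}x_{k}$; note $\bar{x}\in[m,M]$ because $\bar{x}$ is a convex combination of the $x_k$ with $\beta_k\neq 0$.

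First I would obtain the upper bound $A\le\phi(\bar{x})$ directly from Jensen's inequality, since $\phi$ is concave on $[0,1]$. Next, I would derive the lower bound $A\ge\phi(m)$. This is the step where the simplex constraint $\sum_{i}x_i=1$ enters in an essential way and is, in my view, the main obstacle: without it the bound can fail. The key observation is that if $k^{*}$ realizes the minimum $m$, then for every other index $k\neq k^{*}$ we have $x_{k}\le 1-x_{k^{*}}=1-m$, simply because $x_{k}+x_{k^{*}}\le\sum_{i}x_i=1$ and all $x_i\ge 0$. Combined with $x_{k}\ge m$ and the symmetry $\phi(t)=\phi(1-t)$ together with unimodality of $\phi$ (increasing on $[0,1/2]$, decreasing on $[1/2,1]$), this gives $\phi(x_{k})\ge\phi(m)$ for every $k$ with $\beta_{k}\neq 0$. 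Averaging against $\beta$ and using $\sum_{k:\beta_{k}\neq 0}\beta_{k}=1$ yields $A\ge\phi(m)$. (In the degenerate case $m>1/2$, the same inequality $x_{k}\le 1-m<m$ forces only a single index $k^{*}$ to carry nonzero weight, so $A=\phi(m)$ trivially.)

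Having $\phi(m)\le A\le\phi(\bar{x})$, I would then locate $v$ inside $[m,\bar{x}]$ by the intermediate value theorem, which guarantees $v\le\bar{x}=\sum_{k}\beta_{k}x_{k}$ as required. Split into two easy cases: (i) if $\bar{x}\le 1/2$, then $\phi$ is continuous and strictly increasing on $[m,\bar{x}]$, so $\phi([m,\bar{x}])=[\phi(m),\phi(\bar{x})]\ni A$ and IVT yields $v$; (ii) if $\bar{x}>1/2$, then $\phi$ attains its maximum $1/4$ at $1/2\in[m,\bar{x}]$ (or, in the degenerate subcase $m>1/2$, one has $\bar{x}=m$ and the conclusion is immediate), and the image $\phi([m,\bar{x}])=[\min(\phi(m),\phi(\bar{x})),1/4]$ still contains $A$ because $\phi(m)\le A\le\phi(\bar{x})\le 1/4$, so IVT again supplies $v$.

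Finally, the produced $v$ lies in $[m,\bar{x}]\subseteq[m,M]$, satisfies $v\le\sum_{k}\beta_{k}x_{k}$, and obeys $v-v^{2}=\phi(v)=A=\sum_{k}\beta_{k}(x_{k}-x_{k}^{2})$, which is exactly \eqref{lm:4-1}. The only slightly subtle part of the argument is the combinatorial/simplex step producing $A\ge\phi(m)$; everything else is standard concavity and continuity.
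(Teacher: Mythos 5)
Your proof is correct. Note first that this paper does not actually prove Lemma~\ref{lm:4}: it is imported verbatim from the cited reference \cite{XiLiJoBa16}, so there is no in-paper argument to compare against. Your self-contained argument is sound and complete: the Jensen upper bound $A\le\phi(\bar x)$ is immediate from concavity of $\phi(t)=t-t^2$; the lower bound $A\ge\phi(m)$ is the genuinely nontrivial step, and you correctly identify that it is exactly here that the hypothesis $\sum_k x_k=1$ is indispensable (without it the bound fails, e.g.\ for $x=(0.1,0.95)$ with equal weights), and your derivation via $m\le x_k\le 1-x_{k^*}=1-m$ together with the symmetry and concavity of $\phi$ on $[m,1-m]$ is airtight, including the degenerate subcase $m>1/2$ where only one index can carry weight. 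The final IVT step correctly places $v$ in $[m,\bar x]\subseteq[m,M]$, which simultaneously delivers $v\le\sum_k\beta_k x_k$ and $\phi(v)=A$; your case split on whether $1/2\in[m,\bar x]$ handles the non-monotonicity of $\phi$ properly. The only stylistic remark is that the two IVT cases can be merged: since $\phi$ is continuous on $[m,\bar x]$ and $\phi(m)\le A\le\phi(\bar x)$, the intermediate value theorem applied directly to $\phi$ on $[m,\bar x]$ already produces the required $v$ without discussing the image of the interval; but this is cosmetic, not a gap.
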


We identify below the equilibria of system (\ref{eq:sys3}).

\begin{lemma}\label{lm:2}
$e_i$ is an equilibrium of system (\ref{eq:sys3}) for each $i\in \scr V$. In addition, if Assumption \ref{ass:1} holds, then $\frac{1}{n}\bm{1}$ is an equilibrium of system (\ref{eq:sys3}).
\end{lemma}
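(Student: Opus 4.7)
The plan is to verify each claim by direct substitution into the right-hand side of (\ref{eq:sys3}), so the proof reduces to two short computations; no delicate estimates are required.

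First I would substitute $x = e_i$. For the $i$-th coordinate the factor $(1-x_i)x_i = 0$ since $x_i = 1$, and every summand in $\sum_{j\in\scr N_i^{\mathrm{in}}(t)} c_{ji}(t_k)(1-x_j)x_j$ vanishes because $x_j = 0$ for $j \neq i$ (note $j=i$ never occurs in an incoming neighborhood since $C(t_k)$ has zero diagonal, though even without that assumption the term $(1-x_i)x_i = 0$ makes it vanish). For any coordinate $k \neq i$ the factor $(1-x_k)x_k = 0$ since $x_k = 0$, and in the sum $\sum_{j\in\scr N_k^{\mathrm{in}}(t)} c_{jk}(t_k)(1-x_j)x_j$ the only potentially nonzero index is $j = i$, but there $(1-x_i)x_i = 0$. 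Hence $\dot x = 0$ at $x = e_i$. This part uses only the structural form of (\ref{eq:sys3}) and does not require Assumption \ref{ass:1}.

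Next I would substitute $x = \tfrac{1}{n}\bm 1$. Every coordinate gives $(1-x_i)x_i = \tfrac{n-1}{n^2}$, so the right-hand side of the $i$-th equation becomes
\[
-\frac{n-1}{n^2} + \frac{n-1}{n^2}\sum_{j\in\scr N_i^{\mathrm{in}}(t)} c_{ji}(t_k)
= -\frac{n-1}{n^2} + \frac{n-1}{n^2}\sum_{j=1}^n c_{ji}(t_k).
\]
Here is precisely where Assumption \ref{ass:1} enters: because each $C(t_k)$ is doubly stochastic, the $i$-th column sum equals $1$, so the bracket vanishes and $\dot x_i = 0$ for every $i$.

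There is no real obstacle; the only subtlety worth flagging is that $e_i$ is an equilibrium for arbitrary stochastic $C(t_k)$ (so the first claim needs no additional hypothesis), whereas $\tfrac{1}{n}\bm 1$ requires $C(t_k)$ to be column-stochastic, which is exactly what Assumption \ref{ass:1} supplies uniformly in $k$. I would state both observations explicitly to make the role of Assumption \ref{ass:1} transparent.
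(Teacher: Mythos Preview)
Your proof is correct and follows essentially the same approach as the paper: direct substitution into the dynamics and checking that the right-hand side vanishes, using only stochasticity for $e_i$ and double stochasticity (via Assumption~\ref{ass:1}) for $\tfrac{1}{n}\bm{1}$. The paper works with the compact matrix form~\eqref{eq:sys4} (observing $(I-\diag(e_i))e_i=0$ and $C(t_k)^\top\bm{1}=\bm{1}$) while you compute coordinate-wise in~\eqref{eq:sys3}, but this is a purely cosmetic difference.
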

\begin{proof}
Note that $(I-\diag(e_i))e_i=e_i-e_i=0$. It follows that
\begin{align*}
-(I-\diag(e_i))e_i+C(t)^\top(I-\diag(e_i))e_i=0.
\end{align*}
Therefore, $e_i$ is an equilibrium of system (\ref{eq:sys3}) for each $i\in V$. If Assumption \ref{ass:1} holds, then
\begin{align*}
&-(I-\diag(\frac{1}{n}\bm{1}))\frac{1}{n}\bm{1}+C(t_k)^\top(I-\diag(\frac{1}{n}\bm{1}))\frac{1}{n}\bm{1}\\
=&-\frac{n-1}{n^2}\bm{1}+C(t_k)^\top\frac{n-1}{n^2}\bm{1}\\
=&0,
\end{align*}
where the last equality makes use of the assumption that $C(t_k)^\top\bm{1}=\bm{1}$ for all $t_k\geq t_0$.
\end{proof}

Before showing that the set $\Delta_n$ is positive invariant we prove a basic property of system (\ref{eq:sys1}).

\begin{lemma}\label{lm:1}
For system (\ref{eq:sys1}), $\bm{1}^\top x(t)=\bm{1}^\top x(t_0)$ for all $t\geq t_0$.
\end{lemma}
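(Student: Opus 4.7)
The plan is to show that $\bm{1}^\top \dot{x}(t) = 0$ for all $t \geq t_0$, so that $\bm{1}^\top x(t)$ is constant along trajectories. I would work from the compact vector form \rep{eq:sys2}, namely $\dot{x}(t) = -(I - X(t))x(t) + C(t)^\top (I - X(t))x(t)$, and left-multiply by $\bm{1}^\top$ to obtain
\[
\bm{1}^\top \dot{x}(t) = -\bm{1}^\top(I-X(t))x(t) + \bm{1}^\top C(t)^\top (I-X(t))x(t).
\]

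The key observation is that $C(t)$ is a row-stochastic matrix, so $C(t)\bm{1} = \bm{1}$, which upon transposition gives $\bm{1}^\top C(t)^\top = \bm{1}^\top$. Substituting this into the second term makes it cancel the first term exactly, yielding $\bm{1}^\top \dot{x}(t) = 0$. Integrating from $t_0$ to $t$ then gives $\bm{1}^\top x(t) = \bm{1}^\top x(t_0)$ for all $t \geq t_0$.

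Equivalently, one can verify the identity componentwise directly from \rep{eq:sys1}. Summing both sides over $i \in \scr V$ and exchanging the order of summation in the double sum, using the fact that $j \in \scr N_i^{\rm in}(t)$ iff $i \in \scr N_j^{\rm out}(t)$, gives
\[
\sum_{i=1}^n \dot{x}_i(t) = -\sum_{i=1}^n (1-x_i(t))x_i(t) + \sum_{j=1}^n (1-x_j(t))x_j(t) \sum_{i \in \scr N_j^{\rm out}(t)} c_{ji}(t),
\]
and invoking the column-sum constraint \rep{eq:C2} (which holds whether $C(t)$ is doubly stochastic or just row-stochastic) shows each term in the outer sum on the right is $(1-x_j(t))x_j(t)$, so the two sums cancel.

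There is no real obstacle here: the lemma is a conservation law that follows purely from the row-stochasticity of $C(t)$, and the proof is a one-line cancellation once the system is written in matrix form. Note that the argument does not require Assumption \ref{ass:1} (double stochasticity) — only the definitional constraint \rep{eq:C2}. This is exactly why the initial normalization $\sum_i x_i(0) = 1$ propagates in time, which will later be needed to show that $\Delta_n$ is positive invariant.
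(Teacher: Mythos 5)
Your proof is correct and follows essentially the same route as the paper: left-multiply the compact form \rep{eq:sys2} by $\bm{1}^\top$, use $\bm{1}^\top C(t)^\top=(C(t)\bm{1})^\top=\bm{1}^\top$ from row-stochasticity, and conclude $\bm{1}^\top\dot{x}(t)=0$. Your observation that only row-stochasticity (the constraint \rep{eq:C2}, which is a row-sum condition, not a column-sum one) is needed matches the paper's proof exactly.
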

\begin{proof}
Direct calculation gives that
\begin{align*}
&\bm{1}^\top\dot{x}(t)\\
=&-\bm{1}^\top(I-X(t))x(t)+\bm{1}^\top C(t)^\top(I-X(t))x(t)\\
=&-\bm{1}^\top(I-X(t))x(t)+\bm{1}^\top(I-X(t))x(t)\\
=&0,
\end{align*}
for all $t\geq t_0$, where the second equality makes use of  the assumption that $C(t)$ is a stochastic matrix for all $t\geq t_0$. The desired conclusion follows.
\end{proof}

We are now in a position to prove item (a) in Theorem \ref{thm:1} and some important properties of the functions $l(t)$ and $h(t)$.

\begin{lemma}\label{lm:5}
Suppose that Assumption \ref{ass:1} holds. $\Delta_n$ is a positive invariant set of system (\ref{eq:sys3}).  In addition, for the initial condition $x(t_0)\in\Delta_n$, $l(t)$ is a nondecreasing function and $h(t)$ is a nonincreasing function.
\end{lemma}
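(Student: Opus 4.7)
The plan is to establish the two claims of the lemma independently. Since Lemma~\ref{lm:1} already provides $\bm{1}^\top x(t)\equiv 1$, positive invariance of $\Delta_n$ reduces to ruling out any coordinate going negative. To establish that $h(t)$ is nonincreasing and $l(t)$ is nondecreasing, I would apply Lemma~\ref{lm:3} to compute upper Dini derivatives at the extreme indices, combined with Lemma~\ref{lm:4} to rewrite the awkward sum in \rep{eq:sys3} into a clean product form.

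For positive invariance, I would use a standard tangency (Nagumo-type) argument. Suppose for contradiction that $t^\star > t_0$ is the first time at which some coordinate vanishes, say $x_i(t^\star)=0$, while $x(t)\in\Delta_n$ on $[t_0,t^\star]$. At $t=t^\star$ the term $-(1-x_i)x_i$ in \rep{eq:sys3} is zero, leaving
\[
\dot x_i(t^\star) = \sum_{j\in\scr N_i^{{\rm in}}(t^\star)} c_{ji}(t_k)\bigl(1-x_j(t^\star)\bigr)x_j(t^\star) \ge 0,
\]
since each factor $(1-x_j)x_j$ is nonnegative on $[0,1]$ and every coordinate of a point in $\Delta_n$ lies in that interval. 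A routine perturbation/continuation argument then rules out the crossing and yields $x(t)\in\Delta_n$ for all $t\ge t_0$.

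For the monotonicity of $h(t)$, let $I(t) = \{i:x_i(t)=h(t)\}$. By Lemma~\ref{lm:3}, $D^+ h(t) = \max_{i\in I(t)}\dot x_i(t)$. For each $i\in I(t)$, I would apply Lemma~\ref{lm:4} with weights $\beta_j = c_{ji}(t_k)$, which sum to $1$ because $C(t_k)$ is column-stochastic (Assumption~\ref{ass:1}); the lemma supplies a scalar $v_i$ with $v_i - v_i^2 = \sum_j c_{ji}(t_k)(x_j-x_j^2)$ and $v_i\in [\min_{j:c_{ji}\ne 0} x_j,\ \max_{j:c_{ji}\ne 0} x_j]$. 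Substituting into \rep{eq:sys3} and factoring yields
\[
\dot x_i(t) = -(1-x_i)x_i + v_i - v_i^2 = -(x_i-v_i)(1-x_i-v_i).
\]
The decisive step is that both factors are nonneg at $i\in I(t)$: clearly $v_i\le h(t)=x_i$, so $x_i-v_i\ge 0$; and because $c_{ii}(t_k)=0$, the range of $v_i$ is supported on indices $j\ne i$, for which $x_j\le 1-x_i$ (since $\sum_{j\ne i} x_j = 1-x_i$ with all terms nonneg), giving $v_i\le 1-x_i$. Hence $\dot x_i(t)\le 0$ and $D^+ h(t)\le 0$. The statement for $l(t)$ follows by the same factorization at $i\in J(t)=\{i:x_i(t)=l(t)\}$: now $v_i\ge l(t)=x_i$ gives $x_i-v_i\le 0$, while the bound $v_i\le 1-x_i$ still holds (it is a simplex constraint valid for every $i$), so $\dot x_i(t)\ge 0$ and thus $l(t)$ is nondecreasing.

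The main obstacle, and the crux of the argument, is the inequality $v_i\le 1-x_i$. Its validity hinges on the zero-diagonal condition in Assumption~\ref{ass:1}: without it, $v_i$ could attain the value $x_i = h(t)$ itself, and $1-x_i-v_i$ could be negative, destroying the sign structure. The remaining algebra and the tangency argument are routine.
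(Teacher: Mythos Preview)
Your proposal is correct and follows essentially the same route as the paper: both arguments use Lemma~\ref{lm:4} to rewrite $\dot x_i$ as $-(x_i-v_i)(1-x_i-v_i)$ and then exploit the zero-diagonal of $C(t_k)$ together with the simplex constraint to get $x_i+v_i\le x_i+\max_{j\in\scr N_i^{\rm in}}x_j\le 1$, which is exactly the ``crux'' inequality you isolate. The only cosmetic difference is that for positive invariance you check $\dot x_i\ge 0$ at $x_i=0$ directly from the nonnegativity of each summand $c_{ji}(1-x_j)x_j$, whereas the paper runs the same Lemma~\ref{lm:4} factorization at the boundary points $l(t^\ast)=0$ and $h(t^\ast)=1$; your version is slightly more elementary there but otherwise identical in substance.
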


\begin{proof} Let $I_1(t)=\{i\in\mathcal V|x_i(t)=h(t)\}$ and $I_2(t)=\{i\in\mathcal V|x_i(t)=l(t)\}$.  By Lemma \ref{lm:1}, we have that  $\bm{1}^\top x(t)=1$ for $t\geq t_0$ if $x(t_0)\in\Delta_n$.  Let $t^\ast\geq0$ be the time instant such that for $t\in[t_0,t^\ast)$, $x(t)\in\Delta_n$, and $h(t^\ast)=1$ or $l(t^\ast)=0$. First consider the case when $h(t^\ast)=1$. Then, one knows that there is only one element, say $i,$ lies in $I_1(t^\ast)$, and hence $x_i(t^\ast)=1$ and $x_j(t^\ast)=0$ for $j\in\scr V\backslash I_1(t^\ast).$ By Lemma \ref{lm:3},
\begin{equation}\label{eq:lm5-4}
D^+h(t^\ast)=\dot{x}_i(t^\ast)=0.
\end{equation}
%\begin{align*}
%&D^+h(t^\ast)=\dot{x}_i(t^\ast)\\
%=&\Big(-(1-x_i(t))x_i(t)+\sum_{j\in\scr N_i^{{\rm in}}(t)}c_{ji}(t)(1-x_j(t))x_j(t).\Big)\Big|_{t=t^\ast}\\
%=&0.
%\end{align*}

Next assume that $l(t^\ast)=0$. Since Assumption \ref{ass:1} holds, $\sum_{j=1}^nc_{ji}(t)=1$ for all $i$ and $t\geq t_0$. The vector $x(t^\ast)$ satisfies that $x(t^\ast)\geq0$ and $\sum_{j=1}^nx_j(t^\ast)=1$. For each $i\in \scr V$, it follows from Lemma \ref{lm:4} that there exists a constant
$$v_i(t^\ast)\in \left[\min\limits_{j\in\mathcal N_{i}^{{\rm in}}(t^\ast)}\{x_j(t^\ast)\},\max\limits_{j\in\mathcal N_{i}^{{\rm in}}(t^\ast)}\{x_j(t^\ast)\}\right]$$
such that $v_i(t^\ast)\leq\sum_{j\in\scr N_i^{{\rm in}}(t)}c_{ji}(t^\ast)x_j(t^\ast)$, and
\begin{equation}\label{eq:lm5-1}
\sum_{j\in\scr N_i^{{\rm in}}(t)}c_{ji}(t^\ast)(1-x_j(t^\ast))x_j(t^\ast)=v_i(t^\ast)-v_i^2(t^\ast).
\end{equation}
Then, for each $i\in I_2(t^\ast)$, $\dot{x}_i(t)$ at $t=t^\ast$ is given by
\begin{align*}
&-(1-x_i(t^\ast))x_i(t^\ast)+\sum_{j\in\scr N_i^{{\rm in}}(t)}c_{ji}(t^\ast)(1-x_j(t^\ast))x_j(t^\ast)\\
=&-(1-x_i(t^\ast))x_i(t^\ast)+v_i(t^\ast)-v_i^2(t^\ast)\\
=&-(x_i(t^\ast)-v_i(t^\ast))(1-x_i(t^\ast)-v_i(t^\ast)).
\end{align*}
Since
$$0=x_i(t^\ast)\leq \min\limits_{j\in\mathcal N_{i}^{{\rm in}}(t^\ast)}\{x_j(t^\ast)\}\leq v_i(t^\ast)$$
and
$$x_i(t^\ast)+v_i(t^\ast)\leq x_i(t^\ast)+\max\limits_{j\in\mathcal N_{i}^{{\rm in}}(t^\ast)}\{x_j(t^\ast)\}\leq 1,$$ it follows that $\dot{x}_i(t^\ast)\geq 0$. In view of Lemma \ref{lm:4},
\begin{equation}\label{eq:lm5-5}
D^+(-l(t^\ast))=\max_{i\in I_2(t^\ast)}\{-\dot{x}_i(t^\ast)\}\leq0.
\end{equation}
Then, (\ref{eq:lm5-4}) and (\ref{eq:lm5-5})  imply that for all $t\geq t_0$, $0\leq l(t)\leq h(t)\leq 1$. Combining with Lemma \ref{lm:1}, $\Delta_n$ is a positive invariant set.

We next show that $h(t)$ is a nondecreasing function. For each $i\in I_1(t)$ and $t\in[t_k,t_{k+1})$,
\begin{align}\label{eq:lm5-2}
\dot{x}_i(t)=&-(1-x_i(t))x_i(t)+\sum_{j\in\scr N_i^{{\rm in}}(t)}c_{ji}(t)(1-x_j(t))x_j(t)\nonumber\\
%=&-(1-x_i(t))x_i(t)+v_i(t)-v_i^2(t)\nonumber\\
=&-(x_i(t)-v_i(t))(1-x_i(t)-v_i(t)),
\end{align}
where
$$v_i(t)\in \left[\min\limits_{j\in\mathcal N_{i}^{{\rm in}}(t)}\{x_j(t)\},\max\limits_{j\in\mathcal N_{i}^{{\rm in}}(t)}\{x_j(t)\}\right]$$
satisfies that $v_i(t)\leq\sum_{j\in\scr N_i^{{\rm in}}(t)}c_{ji}(t)x_j(t)$, and
\begin{equation}\label{eq:5-3}
\sum_{j\in\scr N_i^{{\rm in}}(t)}c_{ji}(t)(1-x_j(t))x_j(t)=v_i(t)-v_i^2(t).
\end{equation}
Since
$$x_i(t)\geq \max\limits_{j\in\mathcal N_{i}^{{\rm in}}(t)}\{x_j(t)\}\geq v_i(t)$$
and
$$x_i(t)+v_i(t)\leq x_i(t)+\max\limits_{j\in\mathcal N_{i}^{{\rm in}}(t)}\{x_j(t)\}\leq 1,$$ it follows that $\dot{x}_i(t)\leq 0$. It follows from Lemma \ref{lm:4} that $D^+h(t)=\max_{i\in I_1(t)}\dot{x}_i(t)\leq0,$ and hence $h(t)$ is nonincreasing. The conclusion that $l(t)$ is a nondecreasing function can be proved in a similar way.
\end{proof}

The previous lemma has shown that $\Delta_n$ is positive invariant. The next result says that as long as the initial state $x(t_0)$ is not a vertex of $\Delta_n$, the system state will enter into the interior of the simplex $\Delta_n$ in finite time.

\begin{lemma}\label{lm:7}
Assume that Assumptions \ref{ass:1}, \ref{ass:2}, and \ref{ass:3} hold. Suppose that $x({t_0})\in\Delta_n\backslash\{e_1,\dots,e_n\}$ and has $m$ nonzero entries. Then, $x(t)>0$, for $t\geq t_{(n-m)B}$.
\end{lemma}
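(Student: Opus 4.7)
The plan is to track the set of indices with positive self-appraisal, $S(t)\triangleq\{i\in\scr V:x_i(t)>0\}$, and show that (i) $S(\cdot)$ is monotone nondecreasing in time and (ii) its cardinality strictly increases over every block of $B$ consecutive switching intervals, until it equals $\scr V$. Since $|S(t_0)|=m$, induction then yields $S(t_{(n-m)B})=\scr V$, i.e., $x(t_{(n-m)B})>0$; the extension to all $t\ge t_{(n-m)B}$ then falls out of Lemma \ref{lm:5}. I would organize the argument around two lemmas: a persistence lemma that controls how fast a positive entry can decay, and a propagation lemma that exploits Assumption \ref{ass:2}.

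For persistence, observe that from (\ref{eq:sys3}) the sum $\sum_{j\in\scr N_i^{\rm in}(t)}c_{ji}(t_k)(1-x_j)x_j$ is nonnegative on $\Delta_n$, so $\dot x_i(t)\ge -(1-x_i(t))x_i(t)\ge -x_i(t)$. A Gr\"onwall-type comparison then gives $x_i(t)\ge x_i(s)e^{-(t-s)}$ whenever $x_i(s)>0$, so once an index enters $S(\cdot)$ it never leaves. In particular, any $i\in S(t_{lB})$ satisfies $x_i(t)>0$ on the entire block $[t_{lB},t_{(l+1)B})$. Moreover, since $m\ge2$, we have $|S(t)|\ge2$ throughout, hence every positive entry is strictly less than $1$ (the entries sum to $1$ by Lemma \ref{lm:1}).

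The heart of the argument is the inductive step: if $S(t_{lB})\subsetneq\scr V$, then $|S(t_{(l+1)B})|\ge|S(t_{lB})|+1$. By Assumption \ref{ass:2} the union graph $\bigcup_{k=lB}^{(l+1)B-1}\bb G(t_k)$ is strongly connected, so for any $u\in S(t_{lB})$ and any $v\notin S(t_{lB})$ there is a directed path $u\to\cdots\to v$ in the union. Walking along this path from $u$, the first arc that leaves $S(t_{lB})$ identifies indices $i^\ast\in S(t_{lB})$ and $j^\ast\notin S(t_{lB})$ together with an index $k^\ast\in\{lB,\dots,(l+1)B-1\}$ such that $i^\ast\in\scr N_{j^\ast}^{\rm in}(t_{k^\ast})$, with $c_{i^\ast j^\ast}(t_{k^\ast})\ge\gamma$ by Assumption \ref{ass:1}. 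By persistence, $x_{i^\ast}(t_{k^\ast})\in(0,1)$. Either $x_{j^\ast}(t_{k^\ast})$ is already positive (so $j^\ast\in S(t_{k^\ast})$), or $x_{j^\ast}(t_{k^\ast})=0$, in which case the right derivative of $x_{j^\ast}$ at $t_{k^\ast}$ is at least $c_{i^\ast j^\ast}(t_{k^\ast})(1-x_{i^\ast}(t_{k^\ast}))x_{i^\ast}(t_{k^\ast})>0$, forcing $x_{j^\ast}(t)>0$ on a right neighborhood of $t_{k^\ast}$ and, by persistence, throughout $[t_{k^\ast+1},t_{(l+1)B}]$. In either case $j^\ast\in S(t_{(l+1)B})\setminus S(t_{lB})$, as required.

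Iterating this step $n-m$ times yields $|S(t_{(n-m)B})|\ge n$, so $x(t_{(n-m)B})>0$. For $t\ge t_{(n-m)B}$, Lemma \ref{lm:5} asserts that $l(t)=\min_{i\in\scr V}x_i(t)$ is nondecreasing; since $l(t_{(n-m)B})>0$, we conclude $x(t)\ge l(t_{(n-m)B})\bm 1>0$ for all $t\ge t_{(n-m)B}$. The main obstacle I anticipate is the propagation step — specifically, verifying cleanly that the ``first boundary arc'' argument can be executed within a single block and that the required $x_{i^\ast}\in(0,1)$ holds at the correct instant $t_{k^\ast}$; this is where the hypothesis $m\ge 2$ (guaranteeing $|S(\cdot)|\ge 2$ always) plays an essential role, since without it the dominant-vertex scenario $x_{i^\ast}=1$ could kill the forcing term.
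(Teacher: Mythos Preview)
Your proposal is correct and follows essentially the same approach as the paper: both establish persistence via the differential inequality $\dot x_i\ge -x_i$ (hence $x_i(t)\ge e^{-(t-s)}x_i(s)$), then use the joint strong connectivity of Assumption~\ref{ass:2} to find, within each $B$-block, an arc from the currently positive set to its complement and show the receiving node acquires a positive right derivative. The only cosmetic differences are that you organize the induction block-by-block over $[t_{lB},t_{(l+1)B})$ while the paper defines a sequence $t_{k_1},\dots,t_{k_s}$ with $k_{q+1}\le k_q+B$, and you invoke Lemma~\ref{lm:5} for the extension to $t\ge t_{(n-m)B}$ whereas the paper reuses the persistence inequality directly; your explicit use of $m\ge 2$ to guarantee $x_{i^\ast}<1$ is a point the paper leaves implicit.
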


\begin{proof}
We first prove the conclusion that if $x_i(t^\ast)>0$ for some $t^\ast\geq t_0$, then $x_i(t)>0$ for $t\geq t^\ast$.
Note that $x(t)\in\Delta_n$ for all $t\geq t_0$ by Lemma \ref{lm:5}, and $C(t_k)$ are stochastic matrices for $k\geq0$. We have
\begin{align*}
\dot{x}_i(t)=&-(1-x_i(t))x_i(t)+\sum_{j\in\scr N_i^{{\rm in}}(t)}c_{ji}(t)(1-x_j(t))x_j(t)\\
\geq&-x_i(t),
\end{align*}
implying that $x_i(t)\geq e^{-t}x_i(t^\ast)$, $t\geq t^\ast$. Therefore, $x_i(t)>0$ for $t\geq t^\ast$ if $x_i(t^\ast)>0$. Define $\mathcal S(t)=\{i\in\scr V|x_i(t)>0\}$. Then, $|\mathcal S(t)|$ is a nondecreasing function and from the assumption of the lemma, $|\mathcal S(t_0)|=m$. It suffices to show that $|\mathcal S(t_{(n-m)B})|=n$.

Define
$$t_{k_1}=\min\{t_k| c_{ji}(t_k)> 0, \text{for some } j\in\mathcal S(t_0),\ i\in \scr V\backslash \mathcal S(t_0)\}.$$
From Assumption \ref{ass:2}, $t_{k_1}$ is well defined, and ${k_1}\leq {B-1}$. At $t_{k_1}$, suppose $c_{ji}(t_{k_1})>0$ for some  $j\in\mathcal S(t_0)$ and $i\in \scr V\backslash \mathcal S(t_0)$. Note that for $i\in\scr V\backslash\mathcal S(t_0)$ and $t\in[t_0,t_{k_1}]$, $x_i(t)=0$. The derivative of $x_i(t)$ at $t=t_{k_1}$ is given by
{\small
\begin{align*}
&\dot{x}_i(t)|_{t=t_{k_1}}\\
=&\Big(-(1-x_i(t))x_i(t)+\sum_{k\in\scr N_i^{{\rm in}}(t)}c_{ki}(t)(1-x_k(t))x_k(t)\Big)\Big|_{t=t_{k_1}}\\
=&\sum_{k\in\scr N_i^{{\rm in}}(t)}c_{ki}(t_{k_1})(1-x_k(t_{k_1}))x_k(t_{k_1})\\
\geq&c_{ji}(t_{k_1})(1-x_j(t_{k_1}))x_j(t_{k_1})\\
>&0.
\end{align*}}
Therefore, there exists a positive constant $\delta$ such that $x_i(t)>0$ for  $t\in (t_{k_1},t_{k_1}+\delta)$. It immediately follows from the previous discussion that $x_i(t)>0$ for $t >t_{k_1}$. Hence, $|\mathcal S(t_{k_1+1})|>|\mathcal S(t_{k_0})|$.

Repeating this process, we can find an  sequence of integers ${k_1},\dots,{k_s}$ such that  $$n=|\mathcal S(t_{k_s+1})|>|\mathcal S(t_{k_{s-1}+1})|>\cdots>|\mathcal S(t_{k_1+1})|>|\mathcal S(t_{k_0})|.$$ Note that $|\mathcal S(t_{k_0})|=m$. Thus, $s\leq n-m$. From Assumption \ref{ass:2}, ${k_q},\ q=1,\dots,s,$ can be chosen such  that $k_{q+1}\leq {k_q+B},\ q=1,\dots,s-1,$. One concludes that ${k_{s}}\leq (n-m)B-1$ and
$t_{k_{s}+1}\leq t_{{(n-m)B}}$. The desired conclusion follows.
\end{proof}

In what follows, we will look at the evolution of $h(t)$ and provide an explicit upper bound for $h(t)$, and thus  the decrease of $V(t)$ over some time interval can be quantitatively characterized.

\begin{lemma}\label{lm:6}
Suppose $n\geq3$. Assume that Assumptions \ref{ass:1}, \ref{ass:2}, and \ref{ass:3} hold. If $x(t_{k_0})>0$ and $x(t_{k_0})\in\Delta_n$ for some integer $k_0\geq0$, then the following inequality holds:
\begin{equation}\label{eq:lm6}
V(t_{k_0+(n-1)B})\leq\Big(1-\alpha \mu^{n-1}\Big)V(t_{k_0}),
\end{equation}
where  $\alpha$ and $\mu$ are given in (\ref{eq:alpha}) with $l(t_{(n-m)B})$ replaced by $l(t_{k_0})$.
\end{lemma}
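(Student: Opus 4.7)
The plan is a propagation argument. Over each consecutive block of $B$ switching intervals, the ``small value'' sitting at some vertex of a growing ``low'' set $S_q$ spreads, through the union-connected graph, to at least one more vertex, so that after $n-1$ blocks every agent has been pulled below $h(t_{k_0})$ by $\alpha\mu^{n-1}V(t_{k_0})$. Throughout, Lemma \ref{lm:5} and Lemma \ref{lm:7} guarantee $l(t)\geq l(t_{k_0})>0$ for $t\geq t_{k_0}$, and Lemma \ref{lm:4} (used exactly as in the proof of Lemma \ref{lm:5}) rewrites the ODE as $\dot x_i=-(x_i-v_i)(1-x_i-v_i)$ with $v_i(t)\leq \sum_k c_{ki}(t_k)x_k(t)$ and $v_i(t)\in[\min_{k\in \scr N_i^{{\rm in}}(t)}x_k(t),\max_{k\in \scr N_i^{{\rm in}}(t)}x_k(t)]$. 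I set $f_i(t):=h(t_{k_0})-x_i(t)\geq 0$ and $\alpha_1:=e^{-(1-2l(t_{k_0}))\bar{\tau}_D}$, so $\alpha_1\geq\alpha$.

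The first ingredient is a \emph{maintenance bound}. From $x_i,v_i\geq l(t_{k_0})$ I get $1-x_i-v_i\leq 1-2l(t_{k_0})$, and from $v_i\leq h(t_{k_0})$ I get $v_i-x_i\leq f_i$; hence $\dot f_i\geq -(1-2l(t_{k_0}))f_i$. Integrating yields $f_i(s)\geq e^{-(1-2l(t_{k_0}))(s-t)}f_i(t)$, so $f_i(s)\geq \alpha f_i(t)$ whenever $s-t\leq \bar{\tau}_DB(n-1)$, and $f_i(t_{k+1})\geq \alpha_1 f_i(t_k)$ across any single dwell interval.

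The second ingredient is a \emph{one-step propagation bound}: if $f_i(t_k)\geq \xi$ and $c_{ij}(t_k)\geq\gamma$, then $f_j(t_{k+1})\geq \gamma\alpha_1\xi(1-e^{-\underline{\tau}_Dl(t_{k_0})})\geq \mu\xi$. Indeed, maintenance on $i$ gives $x_i(t)\leq h(t_{k_0})-\alpha_1\xi$ for $t\in[t_k,t_{k+1})$, so
\[ v_j(t)\leq c_{ij}(t_k)x_i(t)+(1-c_{ij}(t_k))h(t_{k_0})\leq h(t_{k_0})-\gamma\alpha_1\xi. \]
For $n\geq 3$, $v_j(t)\leq \max_{s\neq j}x_s(t)$ forces $1-x_j-v_j\geq (n-2)l(t_{k_0})\geq l(t_{k_0})$, and $x_j-v_j\geq \gamma\alpha_1\xi-f_j$ by rearrangement. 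Substituting into $\dot f_j=(x_j-v_j)(1-x_j-v_j)$ gives the linear comparison $\dot f_j+l(t_{k_0})f_j\geq l(t_{k_0})\gamma\alpha_1\xi$, and integration over $\tau_k\geq\underline{\tau}_D$ yields the bound.

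With these two ingredients, the core of the proof is an induction on $q=0,1,\ldots,n-1$ showing that at time $t_{k_0+qB}$ there exist at least $q+1$ distinct agents with $f_i\geq \alpha\mu^q V(t_{k_0})$. The base case uses the minimum agent, where $f_{j_0}(t_{k_0})=V(t_{k_0})\geq \alpha V(t_{k_0})$. For the inductive step, Assumption \ref{ass:2} supplies an arc in $\bigcup_{k=k_0+qB}^{k_0+(q+1)B-1}\bb G(t_k)$ from some old low agent $i_p$ to some new agent $j$, appearing at some slot $t_{k'}$; maintenance on $[t_{k_0+qB},t_{k'}]$ (factor $\geq\alpha_1^{B-1}$), one-step propagation across $[t_{k'},t_{k'+1})$, and a final maintenance up to $t_{k_0+(q+1)B}$ (another $\alpha_1^{B-1}$) combine to give $f_j(t_{k_0+(q+1)B})\geq \gamma\alpha_1^{2B-1}\alpha\mu^q V(t_{k_0})(1-e^{-\underline{\tau}_Dl(t_{k_0})})$; each old low agent is preserved by maintenance alone with $f_{i_p}(t_{k_0+(q+1)B})\geq \alpha_1^B\alpha\mu^q V(t_{k_0})$. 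The induction closes once the elementary inequalities $\alpha_1^{2B-1}\geq \alpha$ and $\alpha_1^B\geq \mu$ are verified, both of which reduce (since $\alpha_1,\alpha,\mu\in(0,1)$) to the condition $n\geq 3$. Finally, $q=n-1$ yields $h(t_{k_0+(n-1)B})\leq h(t_{k_0})-\alpha\mu^{n-1}V(t_{k_0})$, and combining with $l(t_{k_0+(n-1)B})\geq l(t_{k_0})$ gives (\ref{eq:lm6}). The delicate point will be tracking the decay constants consistently across maintenance and propagation: the per-step propagation is really controlled by $\alpha_1\geq\alpha$, and the induction only closes after the conservative substitutions $\alpha_1\to\alpha$ and $(n-2)l\to l$, both of which hinge on $n\geq 3$.
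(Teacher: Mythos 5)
Your proposal is correct and follows essentially the same route as the paper's own proof: the same Lemma \ref{lm:4} factorization $\dot x_i=-(x_i-v_i)(1-x_i-v_i)$, the same two Gr\"onwall estimates (maintenance at rate $1-2l(t_{k_0})$, propagation at rate $l(t_{k_0})$ using $n\geq3$), and the same $B$-block induction driven by Assumption \ref{ass:2}, producing the identical constants $\alpha$ and $\mu$; your reformulation via $f_i=h(t_{k_0})-x_i$ with per-slot factors $\alpha_1$ and the coarsening $\alpha_1^{2B-1}\geq\alpha$, $\alpha_1^{B}\geq\mu$ is only a bookkeeping variant of the paper's Steps 1--3. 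One cosmetic caveat: the literal inequality $\dot f_j+l(t_{k_0})f_j\geq l(t_{k_0})\gamma\alpha_1\xi$ can fail when $f_j>\gamma\alpha_1\xi$ (there $x_j-v_j<0$ is possible, and the factor $1-x_j-v_j$ must then be bounded above by $1-2l(t_{k_0})$ rather than below by $l(t_{k_0})$), but since $\dot f_j\geq0$ on the barrier $f_j=\gamma\alpha_1\xi$ the conclusion $f_j(t_{k+1})\geq\gamma\alpha_1\xi\bigl(1-e^{-\underline{\tau}_Dl(t_{k_0})}\bigr)$ survives, exactly as in the paper's own comparison argument.
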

\begin{proof}
In view of Lemma \ref{lm:5}, $h(t)$ is a nonincreasing function and $l(t)$ is a nondecreasing function. We will bound $h(t_{k_0+(n-1)B})$ from above so that the inequality (\ref{eq:lm6}) can be established. We divide the analysis into three steps.

{\it Step 1.} Let $\mathcal V_0=\{i\in\mathcal V| x_i(t_{k_0})=l(t_{k_0})\}$. For any $i_0\in\mathcal V_0$ and $t\in[t_{k_0},t_{{k_0}+(n-1)B}],$ it follows from Lemma \ref{lm:4} that there exists
\begin{equation}\label{eq:lm6-2}
v_{i_0}(t)\in\left[\min\limits_{j\in\mathcal N_{i_0}^{{\rm in}}(t)}\{x_j(t)\},\max\limits_{j\in\mathcal N_{i_0}^{{\rm in}}(t)}\{x_j(t)\}\right]
  \end{equation}
such that  $v_{i_0}(t)\leq\sum_{j\in\mathcal N_{i_0}^{{\rm in}}(t)}c_{ji_0}(t)x_j(t)$, and
$$\sum_{j\in\scr N_{i_0}^{{\rm in}}(t)}c_{ji_0}(t)(1-x_j(t))x_j(t)=v_{i_0}(t)-v_{i_0}^2(t).$$
Then,
\begin{align}\label{eq:lm6-1}
\dot{x}_{i_0}(t)=&-x_{i_0}(t)+x_{i_0}^2(t)+\sum_{j\in\scr N_{i_0}^{{\rm in}}(t)}c_{ji_0}(t)(1-x_j(t))x_j(t)\nonumber\\
=&-(1-x_{i_0}(t))x_{i_0}(t)+v_{i_0}(t)-v_{i_0}^2(t)\nonumber\\
=&-(x_{i_0}(t)-v_{i_0}(t))(1-x_{i_0}(t)-v_{i_0}(t)).
\end{align}

In view of (\ref{eq:lm6-2}) and Lemma \ref{lm:5}, one has that
$$l(t_{k_0})\leq l(t)\leq v_{i_0}(t)\leq h(t)\leq h(t_{k_0})$$ for $t\geq t_{k_0}.$ In addition,
\begin{align}\label{eq:lm6-3}
2l(t_{k_0})\leq &x_{i_0}(t)+v_{i_0}(t)\nonumber\\
\leq& x_{i_0}(t)+\max\limits_{j\in\mathcal N_{i_0}^{{\rm in}}(t)}\{x_j(t)\}\leq1.
\end{align}
One can then bound $\dot{x}_{i_0}(t)$ from above  as
\begin{align*}%\label{eq:lm6-4}
\dot{x}_{i_0}(t)=&-(x_{i_0}(t)-v_{i_0}(t))(1-x_{i_0}(t)-v_{i_0}(t))\\
\leq&-(x_{i_0}(t)-h(t_{k_0}))(1-x_{i_0}(t)-v_{i_0}(t)).
\end{align*}
It follows from Gr\"{o}nwall inequality that
\begin{align*}%\label{eq:lm6-1}
x_{i_0}(t)\leq& e^{-\int_{t_{k_0}}^t(1-x_{i_0}(s)-v_{i_0}(s))ds}x_{i_0}(t_{k_0})\\
&\qquad+\Big(1-e^{-\int_{t_{k_0}}^t(1-x_{i_0}(s)-v_{i_0}(s))ds}\Big)h(t_{k_0})\\
=&e^{-\int_{t_{k_0}}^t(1-x_{i_0}(s)-v_{i_0}(s))ds}l(t_{k_0})\\
&\qquad+\Big(1-e^{-\int_{t_{k_0}}^t(1-x_{i_0}(s)-v_{i_0}(s))ds}\Big)h(t_{k_0})
\end{align*}
for $t\in[t_{k_0},t_{{k_0}+(n-1)B}].$ Inequality (\ref{eq:lm6-3}) implies that
$$1- x_{i_0}(t)-v_{i_0}(t)\leq 1-2l(t_{k_0})$$
for $t\in[t_{k_0},t_{{k_0}+(n-1)B}]$ and therefore one has
\begin{align*}%\label{eq:lm6-1}
e^{-\int_{t_{k_0}}^t(1-x_{i_0}(s)-v_{i_0}(s))ds}\geq& e^{-(1-2l(t_{k_0}))(t-t_{k_0})}\\
\geq & e^{-(1-2l(t_{k_0}))(t_{k_0+(n-1)B}-t_{k_0})}\\
\geq & e^{-\bar{\tau}_DB(n-1)(1-2l(t_{k_0}))}\\
=&\alpha,
\end{align*}
where the last inequality makes use of Assumption \ref{ass:3} that $\underline{\tau}_D\leq\tau_k\leq\bar{\tau}_D$ for all $k\geq0$. We obtain the following bound for $x_{i_0}(t)$:
\begin{align}\label{eq:lm6-4}
x_{i_0}(t)\leq& e^{-(1-2l(t_{k_0}))(t-t_{k_0})}l(t_{k_0})\nonumber\\
&\qquad\qquad+\Big(1-e^{-(1-2l(t_{k_0}))(t-t_{k_0})}\Big)h(t_{k_0})\nonumber\\
\leq& \alpha l(t_{k_0})+(1-\alpha)h(t_{k_0})
\end{align}
for $t\in[t_{k_0},t_{{k_0}+(n-1)B}]$.

{\it Step 2. }Define
\begin{align*}
&k_1=\min\{k\geq {k_0}|c_{ji}(t_k)> 0, \text{for some } j\in\mathcal V_0,\ i\in \mathcal V\backslash \mathcal V_0\},\\
&\mathcal V_1=\{ i\in \mathcal V\backslash \mathcal V_0|c_{ji}(t_{k_1})>0, \text{for some } j\in\mathcal V_0\}.
\end{align*}
From Assumption \ref{ass:2}, ${k_1}$ is well defined and satisfies that ${k_0}\leq {k_1}\leq{k_0+B-1}$.

For any $i_1\in\mathcal V_1$, it follows from Lemma \ref{lm:4} that
\begin{align}\label{eq:lm6-5}
\dot{x}_{i_1}(t)=&-x_{i_1}(t)+x_{i_1}^2(t)+\sum_{j\in\scr N_{i_1}^{{\rm in}}(t)}c_{ji_1}(t)(1-x_j(t))x_j(t)\nonumber\\
=&-(x_{i_1}(t)-v_{i_1}(t))(1-x_{i_1}(t)-v_{i_1}(t)),
\end{align}
where
\begin{equation}\label{eq:lm6-6}
v_{i_1}(t)\in\left[\min\limits_{j\in\mathcal N_{i_1}^{{\rm in}}(t)}\{x_j(t)\},\max\limits_{j\in\mathcal N_{i_1}^{{\rm in}}(t)}\{x_j(t)\}\right]
  \end{equation}
 satisfies that
\begin{equation}\label{eq:lm6-7}
v_{i_1}(t)\leq\sum_{j\in\mathcal N_{i_1}^{{\rm in}}(t)}c_{ji_1}(t)x_j(t),
\end{equation}
 and
$$\sum_{j\in\scr N_{i_1}^{{\rm in}}(t)}c_{ji_1}(t)(1-x_j(t))x_j(t)=v_{i_1}(t)-v_{i_1}^2(t).$$
Similarly, one has that
$l(t_{k_0})\leq v_{i_1}(t)\leq h(t_{k_0})$ and $ x_{i_1}(t)+v_{i_1}(t)\geq2l(t_{k_0})$, for $t\geq t_{k_1}$.
Note that $n\geq3$ and $\sum_{i\in\scr V}x_i(t)=1,\ t\geq t_0$. One has that
\begin{align}\label{eq:lm6-8}
x_{i_1}(t)+v_{i_1}(t)
\leq& x_{i_1}(t)+\max\limits_{j\in\mathcal N_{i_1}^{{\rm in}}(t)}\{x_j(t)\}\nonumber\\
\leq& 1-l(t_{k_1})\leq1-l(t_{k_0}).
\end{align}

From the definition of $\mathcal V_1$, there exists some $i_0\in\mathcal V_0$ such that $c_{i_0i_1}(t_{k_1})>0$.
It then follows from (\ref{eq:lm6-7}) that
\begin{align*}
v_{i_1}(t)&\leq\sum_{k\in\mathcal N_{i_1}^{{\rm in}}(t)}c_{ki_1}(t_{k_1})x_k(t)\\
&\leq c_{i_0i_1}(t_{k_1})x_{i_0}(t)+(1- c_{i_0i_1}(t_{k_1}))h(t_{k_1})\\
&\leq \gamma\Big(\alpha l(t_{k_0})+(1-\alpha )h(t_{k_0})\Big)+(1-\gamma) h(t_{k_0})\\
&=\alpha\gamma l(t_{k_0})+(1-\alpha \gamma) h(t_{k_0}),
\end{align*}
for $t\in[t_{k_1},t_{k_1+1}],$ where the third inequality makes use of Assumption \ref{ass:1} that $c_{i_0i_1}(t_k)\geq\gamma,\ k\geq0$, inequality (\ref{eq:lm6-4}) and the fact that $h(t_{k_1})\leq h(t_{k_0})$. Combining with (\ref{eq:lm6-5}), one has that
\begin{align}\label{eq:lm6-9}
\dot{x}_{i_1}(t)%x=&-(x_{i_1}(t)-v_{i_1}(t))(1-x_{i_1}(t)-v_{i_1}(t))\nonumber\\
\leq&-\Big(x_{i_1}(t)-(\alpha\gamma l(t_{k_0})+(1-\alpha \gamma) h(t_{k_0}))\Big)\nonumber\\
&\qquad\qquad\qquad\cdot\Big(1-x_{i_1}(t)-v_{i_1}(t)\Big),
\end{align}
 for $t\in[t_{k_1},t_{k_1+1}].$ This implies that for $t\in[t_{k_1},t_{k_1+1}],$
\begin{align}\label{eq:lm6-10}
x_{i_1}(t)\leq& e^{-\int_{t_{k_1}}^t(1-x_{i_1}(s)-v_{i_1}(s))ds}x_{i_1}(t_{k_1})+\Big(\alpha\gamma l(t_{k_0})\nonumber\\
&+(1-\alpha \gamma) h(t_{k_0})\Big)\Big(1-e^{-\int_{t_{k_1}}^t(1-x_{i_1}(s)-v_{i_1}(s))ds}\Big)\nonumber\\
\leq&e^{-\int_{t_{k_1}}^t(1-x_{i_1}(s)-v_{i_1}(s))ds}h(t_{k_0})+\Big(\alpha\gamma l(t_{k_0})\nonumber\\
&+(1-\alpha \gamma) h(t_{k_0})\Big)\Big(1-e^{-\int_{t_{k_1}}^t(1-x_{i_1}(s)-v_{i_1}(s))ds}\Big).
\end{align}
In view of  inequality (\ref{eq:lm6-8}), we have
\begin{align*}
e^{-\int_{t_{k_1}}^{t_{k_1+1}}(1-x_{i_1}(s)-v_{i_1}(s))ds}
\leq& e^{-l(t_{k_0})(t_{k_1+1}-t_{k_1})}\\
\leq& e^{-\underline{\tau}_Dl(t_{k_0})}.
\end{align*}
We can then obtain an upper bound for $x_{i_1}(t_{k_1+1})$ as
\begin{align*}\label{eq:lm6-10}
x_{i_1}(t_{k_1+1})\leq&e^{-\underline{\tau}_Dl(t_{k_0})}h(t_{k_0})+\Big(\alpha\gamma l(t_{k_0})\nonumber\\
&\qquad\quad(1-\alpha \gamma) h(t_{k_0})\Big)\Big(1-e^{-\underline{\tau}_Dl(t_{k_0})}\Big)\nonumber\\
\leq &\mu  l(t_{k_0})+(1-\mu ) h(t_{k_0}).
\end{align*}

Then, for  $t\in[t_{k_1+1},t_{k_0+(n-1)B}],$ similar to the analysis in step 1, one can obtain that
\begin{align*}%\label{eq:lm6-1}
x_{i_1}(t)\leq& e^{-\int_{t_{k_1+1}}^t(1-x_{i_1}(s)-v_{i_1}(s))ds}x_{i_1}(t_{k_1+1})\\
&\qquad+\Big(1-e^{-\int_{t_{k_1+1}}^t(1-x_{i_1}(s)-v_{i_1}(s))ds}\Big)h(t_{k_1+1})\\
\leq&e^{-(t_{k_0+(n-1)B}-t_{k_1+1})(1-2l(t_{k_0}))}x_{i_1}(t_{k_1+1})\\
&\qquad+\Big(1-e^{-(t_{k_0+(n-1)B}-t_{k_1+1})(1-2l(t_{k_0}))}\Big)h(t_{k_0})\\
\leq&e^{-\bar{\tau}_DB(n-1)(1-2l(t_{k_0}))}\Big(\mu  l(t_{k_0})+(1-\mu ) h(t_{k_0})\Big)\\
&\qquad\qquad+\Big(1-e^{-\bar{\tau}_DB(n-1)(1-2l(t_{k_0}))}\Big)h(t_{k_0})\\
=&\alpha\mu  l(t_{k_0})+(1-\alpha\mu ) h(t_{k_0}).
\end{align*}

{\it Step 3.} Continuing the analysis on time interval $[t_{k_2},t_{k_0+(n-1)B}]$, where $k_2$ is defined as
\begin{align*}
k_2&=\min\{k\geq k_1+1|c_{ji}(t_k)> 0, \text{for some } \\
& \qquad\qquad\qquad\qquad j\in\mathcal V_0\cup\mathcal V_1,\ i\in \mathcal V\backslash (\mathcal V_0\cup\mathcal V_1)\},
\end{align*}
we can similarly define
\begin{align*}
\mathcal V_2&=\{ i\in \mathcal V\backslash (\mathcal V_0\cup\mathcal V_1)|c_{ji}(t_{k_2})>0, \text{for some } j\in\mathcal V_0\cup\mathcal V_1\}.
\end{align*}
Then, using similar arguments to the analysis in step 2,  one can establish an upper bound for $x_{i_2}(t)$, $t\in[t_{k_2+1},t_{k_0+(n-1)B}]$ as
\begin{equation}\label{eq:lm:6-11}
x_{i_2}(t)\leq\alpha\mu^2l(t_{k_0})+(1-\alpha\mu^2 ) h(t_{k_0}).
\end{equation}
Continuing this process, a time sequence $t_{k_0},t_{k_1},\dots,t_{k_p},$  and a sequence of sets $\mathcal V_0,\dots,\mathcal V_p$ are defined as
\begin{align*}
k_{s+1}&=\min\{k\geq k_s+1|c_{ji}(t_{k})> 0, \text{for some } \\
& \qquad\qquad\qquad\qquad j\in\cup_{l=0}^s\mathcal V_l,\ i\in \mathcal V\backslash \cup_{l=0}^s\mathcal V_l\},\\
\mathcal V_{s+1}&=\{ i\in \mathcal V\backslash \cup_{l=0}^s\mathcal V_l|c_{ji}(t_{k_{s+1}})>0, \text{for some }\\
&\qquad\qquad\qquad\qquad j\in\cup_{l=0}^s\mathcal V_l\},
\end{align*}
for $0\leq s\leq p-1$, such that $\mathcal V=\cup_{i=0}^p \mathcal V_i$. By Assumption \ref{ass:2}, $k_s$ satisfies ${k_s+1}\leq {k_{s+1}}\leq{k_s+B}$, for $s=1,\dots,p-1$. Note that $p\leq n-1$ and hence
$${k_p}+1\leq {k_{p-1}}+B+1\leq {k_0}+pB\leq {k_0}+(n-1)B.$$
For all $i\in\mathcal V$ and any $t\in[t_{k_p+1},t_{k_0+(n-1)B}]$, we have the following inequality
\begin{equation}\label{eq:lm6-12}
x_{i}(t)\leq\alpha\mu^p l(t_{k_0})+(1-\alpha\mu^p ) h(t_{k_0}).
\end{equation}
It follows that
\begin{align}\label{eq:lm6-13}
h(t_{k_0+(n-1)B})\leq &h(t_{k_0+pB})\nonumber\\
\leq& \alpha\mu^p l(t_{k_0})+(1-\alpha\mu^p ) h(t_{k_0})\nonumber\\
\leq&\alpha\mu^{n-1} l(t_{k_0})+(1-\alpha\mu^{n-1} ) h(t_{k_0}).
\end{align}
One can then provide a bound for $V(t_{k_0+(n-1)B})$:
\begin{align}\label{eq:lm6-13}
&V(t_{k_0+(n-1)B})\nonumber\\
=&h(t_{k_0+(n-1)B})-l(t_{k_0+(n-1)B})\nonumber\\
\leq &\alpha\mu^{n-1} l(t_{k_0})+(1-\alpha\mu^{n-1} ) h(t_{k_0})-l(t_{k_0})\nonumber\\
\leq&(1-\alpha\mu^{n-1} ) V(t_{k_0}).
\end{align}
This completes the proof.
\end{proof}

We are now in a position to prove Theorem \ref{thm:1}.

{\it Proof of Theorem \ref{thm:1}:}
(a) It has been proved in Lemma \ref{lm:5}.

(b) In view of Lemma \ref{lm:7}, $x(t_{(n-m)B})>0$ and hence $l(t_{(n-m)B})>0$. For $t\geq t_0$, let $s$ be the integer such that $t_s\leq t<t_{s+1}$. Then, from Assumption \ref{ass:3}, $t_{s+1}\leq \bar{\tau}_D(s+1)$, implying that $s\geq \frac{t}{\bar{\tau}_D}-1$.

For $t\geq t_{(n-m)B}$, in view of Lemma \ref{lm:6}, one has
\begin{align*}
V(t)\leq&\Big(1-\alpha \mu^{n-1}\Big)^{\left\lfloor \frac{s-(n-m)B}{(n-1)B}\right\rfloor} V(t_{(n-m)B}).
\end{align*}
Since $s\geq \frac{t}{\bar{\tau}_D}-1$, we have that
\begin{align*}
\left\lfloor \frac{s-(n-m)B}{(n-1)B}\right\rfloor\geq&\left\lfloor\frac{\frac{t}{\bar{\tau}_D}-1-(n-m)B}{(n-1)B}\right\rfloor\\
\geq&\frac{t}{B\bar{\tau}_D(n-1)}-\frac{1+2B(n-1)}{B(n-1)}.
\end{align*}
It follows that
\begin{align*}
V(t)\leq &\Big(1-\alpha \mu^{n-1}\Big)^{\frac{t}{B\bar{\tau}_D(n-1)}-\frac{1+2B(n-1)}{B(n-1)}}V(t_{(n-m)B})\\
\leq &\Big(1-\alpha \mu^{n-1}\Big)^{-\frac{1+2B(n-1)}{B(n-1)}}e^{-\lambda t}V(t_0).
\end{align*}

For $t\in[t_0,t_{(n-m)B})$, inequality (\ref{eq:thm1}) holds since $V(t)\leq V(t_0)$ and $$(1-\alpha \mu^{n-1})^{ -\frac{1+2B(n-1)}{B(n-1)}}e^{-\lambda t}\geq1.$$
This completes the proof.
\hfill $\blacksquare$

%=======================================================================
\section{Discussions} \label{discussion}

The exponential convergence result of Theorem \ref{thm:1} are obtained based on Assumptions \ref{ass:1}, \ref{ass:2}, and \ref{ass:3}. The assumption that the relative interaction matrix $C(t),\ t\geq t_0,$ is doubly stochastic is critical. If it does not hold, the switched system (\ref{eq:sys3}) may not converge as we have seen in Section \ref{motivating}. It is worth noting that for the case when $C(t)\equiv C$ is fixed for all $t\geq t_0$ and is stochastic, but not necessarily doubly stochastic, the analysis of system (\ref{eq:sys1}) is still not complete. Some convergence result of the system has  been established in \cite{ChLiBeXuBa17}  under some constraint on the relative interaction matrix $C$.

With the example in Section \ref{motivating} in mind, for a general time-varying relative interaction matrix $C(t)$,  additional conditions need to be imposed to guarantee the convergence of system (\ref{eq:sys3}). Assumption \ref{ass:1} is such a condition. Whether the convergence of system (\ref{eq:sys3}) can be established under more relaxed conditions remains unknown. Note that $\frac{1}{n}\bm{1}$ is a left eigenvector of the eigenvalue one of every doubly stochastic matrix $C(t_k),\ k\geq0$. This motivates us to conjecture that if $C(t_k),\ k\geq0,$ have a common left eigenvector corresponding to the eigenvalue one, then  the state of system (\ref{eq:sys3}) converges under  Assumption \ref{ass:2} and Assumption \ref{ass:3}. A numerical example is given to validate this conjecture.

Let $$ C_1=\begin{bmatrix}
        0 &1 & 0& 0 \\
        0& 0 &\frac{1}{2}& \frac{1}{2}\\
        \frac{1}{2} &\frac{1}{2} &0 &0\\
        0 &0& 1 &0 \\
      \end{bmatrix},\ C_2=\begin{bmatrix}
                            0& 1& 0& 0  \\
                            \frac{1}{2}& 0 &\frac{1}{2} &0 \\
                            0&\frac{1}{2}& 0 &\frac{1}{2} \\
                            0 &0 &1 &0\\
                          \end{bmatrix}.$$
$C_1$ and $C_2$ have a common left eigenvector $[\frac{1}{6},\frac{1}{3},\frac{1}{3},\frac{1}{6}]^\top$ corresponding to one. Assume that $C(t)$ is the same as in (\ref{eq:C}). For a random initial condition in $\Delta_4\backslash\{e_1,\dots,e_4\}$, the state evolution is shown in Fig.~\ref{fig:2}. It can be seen that the system state converges to an equilibrium point in $\Delta_4\backslash\{e_1,\dots,e_4\}$.

%In the present paper, the constraint in \cite{???} has been removed but the assumption of doubly stochasticity on $C(t)$ has to be satisfied to enable the establishment of  the explicit expression of the convergence rate for system (\ref{eq:sys1}).

\begin{figure}[!htb]
  \centering
  \includegraphics[width=6.5cm]{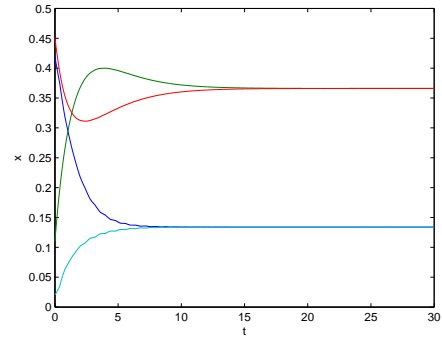}
  \caption{The system state with a time-varying $C(t)$ switching between $C_1$ and $C_2$ with a common left eigenvector to one.}
  \label{fig:2}
\end{figure}

%=======================================================================
%\section{A simulation example} \label{example}
%=======================================================================
\section{Conclusion} \label{conclusion}

In this paper,  the continuous-time self-appraisal model proposed in \cite{ChLiBeXuBa17} with a time-varying relative interaction matrix has been studied. It has been shown that the self-appraisals of the $n$ individuals in a network will all reach $\frac{1}{n}$ exponentially fast if the time-varying relative interaction matrix is piece-wise constant and doubly stochastic. Similar convergence result has been conjectured for the case when all different relative interaction matrices are row-stochastic and share the same dominant left eigenvector. An explicit expression of the convergence rate has been established. We are interested in further looking into the self-appraisal model with a general relative interaction matrix which is row-stochastic, not necessarily doubly stochastic.
%=======================================================================

%=======================================================================

\bibliographystyle{unsrt}
%\bibliography{ref_ming}
%\bibliography{d:/ref_ming}

\end{document}